\documentclass[12pt,reqno,letterpaper]{amsart}
\usepackage{graphicx} 
\usepackage{thmtools}
\usepackage{amsmath,amssymb,amsthm}
\usepackage[left=2cm,top=2cm,right=2cm,bottom=2cm]{geometry}
\usepackage[normalem]{ulem}
\usepackage{mathtools}
\usepackage{enumerate}
\usepackage{comment}
\usepackage[table]{xcolor} 

\usepackage{xspace}
\usepackage{derivative}
\usepackage{bbm}
\usepackage{graphicx}
\usepackage{tikz}
\usetikzlibrary{arrows.meta}
\usetikzlibrary{math}
\usetikzlibrary{external}
\usepackage[a-1b]{pdfx}  
\usepackage[nameinlink]{cleveref}

\newtheorem{theorem}{Theorem}[section]
\newtheorem{lemma}[theorem]{Lemma}
\newtheorem{corollary}[theorem]{Corollary}
\newtheorem{remark}[theorem]{Remark}
\newtheorem{claim}[theorem]{Claim}
\newtheorem{subclaim}[theorem]{Subclaim}
\newtheorem{definition}[theorem]{Definition}
\newtheorem{proposition}[theorem]{Proposition}
\newtheorem{conjecture}[theorem]{Conjecture}



\newcommand{\eps}{\varepsilon}

\newcommand\B{{\mathcal B}}
\newcommand{\F}{{\mathcal F}}
\newcommand{\G}{{\mathcal G}}
\newcommand{\GG}{\widetilde{\mathcal G}}
\newcommand{\seq}{\subseteq}

\newcommand{\tX}{\widetilde{X}}
\newcommand{\hbeta}{\widehat{\beta}}

\newcommand{\hX}{\widehat{X}}

\newcommand{\simeps}[1]{\sim_{#1}}

\newcommand{\geBy}[1]{\mathnormal{\stackrel{(\ref{#1})}{\ge}}}
\newcommand{\leBy}[1]{\mathnormal{\stackrel{(\ref{#1})}{\le}}}

\newcommand{\geByM}[1]{\mathnormal{\stackrel{#1}{\ge}}}

\newcommand{\leByM}[1]{\mathnormal{\stackrel{#1}{\le}}}

\title{Probabilistic Counting Lemma for $K_4$}

\author[Warach Veeranonchai]{Warach Veeranonchai\\
Royal Holloway, University of London}
\address[Warach Veeranonchai]{Department of Mathematics,
Royal Holloway, University of London Egham Hill, Egham, Surrey
TW20 0EX, United Kingdom}
\email{pmah079@live.rhul.ac.uk}

\begin{document}
\begin{abstract}
The sparse analogue of Szemerédi’s regularity method has played a central role in the development of extremal results for random graphs. While the sparse embedding lemma (the K{\L}R conjecture) has been resolved, the sparse version of the counting lemma remains widely open. The conjecture, formulated by Gerke, Marciniszyn, and Steger, states that for every fixed graph $H$ and any $\beta>0$, there exists $\eps>0$ such that the following holds. Consider a balanced blow-up of $H$ with vertex classes of size $n$, where each pair corresponding to an edge of $H$ forms an $(\eps)$-regular bipartite graph with exactly $m$ edges. Assume that $m$ is above the natural threshold $m \gg n^{2-1/m_2(H)}$, then all but a $\beta^m$ proportion of such graphs contain at least $(1-\delta)$ times the expected number of copies of $H$. 
In this paper, we establish the $H=K_4$ case of the conjecture. 
\end{abstract}

\maketitle

\section{Introduction}

The celebrated Regularity Lemma of Szemerédi is a cornerstone of modern extremal graph theory. Roughly speaking, it asserts that the vertex set of every sufficiently large graph can be partitioned into a bounded number of classes such that the edges between most pairs of classes are distributed in a pseudorandom fashion. 
In many applications, the existence of a fixed subgraph is required; statements guaranteeing such existence are known as embedding lemmas. One may also be interested in determining the number of such subgraphs; statements providing asymptotically correct counts are known as counting lemmas. However, the traditional method only works when the graph is dense.

Extending this methodology to sparse graphs has been a driving theme in probabilistic combinatorics over the past two decades. The sparse version of Szemerédi's regularity lemma was independently introduced by Kohayakawa \cite{SparseRegularity} and Rödl, using $(\eps)$-regularity: a variant of $\eps$-regularity suitable for sparse graphs. However, the embedding and counting lemmas do not automatically extend to the sparse regime. 

The first problem arises when the edge density is too small. It is well known that if $p \ll n^{-1/m_2(H)}$ (\Cref{balance}), then with high probability $G(n,p)$ contains no copy of $H$ (\cite[Theorem 8]{threshold}, or see for example \cite[Theorem 5.3]{thresholdNew} for a proof in modern notation).
This naturally leads to a condition that edge density must be at least $d \gg n^{-1/m_2(H)}$.
The second problem is that, even when the density exceeds this threshold, there exist regular graphs with density above this threshold that contain no copy of $H$ \cite{CounterExample}, \cite[Example 3.10]{Application}. Consequently, it is necessary to allow that some regular graphs may fail to contain $H$. 

The sparse counterpart of the embedding lemma formulated by Kohayakawa, Łuczak, and Rödl \cite{KLR} (\Cref{thm:KLR}), known as the K{\L}R conjecture, makes this idea precise by quantifying the proportion of regular graphs that may fail to contain a copy of $H$. The conjecture was formulated as a key step towards extending regularity-based methods from dense graphs to the sparse and random setting, and it has played an important role in the development of extremal results for random graphs. In particular, it enables the use of the regularity method for extremal problems in random graphs in a manner analogous to the dense case and yields many sparse extremal consequences, see \cite{Application}. After several partial results (\cite{Master, SmallSubsets, ProbCountLemma, K4, K5, Triangle}), the conjecture was eventually proved by Balogh, Morris, and Samotij \cite{KLRproof}, and independently by Saxton and Thomason \cite{Container}, through the development of the hypergraph container method.

A sparse counterpart of the counting lemma was proposed by Gerke, Marciniszyn, and Steger \cite{ProbCountLemma} (\Cref{conj: count}). It is usually referred to as a probabilistic counting lemma, to differentiate it from other sparse versions that require stronger pseudorandom hypotheses.
Instead of guaranteeing the existence of a single copy, they conjectured that one can find almost the expected number of copies, as in a random graph with the same edge density.
The conjecture has three main components: it holds for all edge densities above $d \gg n^{-1/m_2(H)}$, the failure probability is at most $\beta^m$ for arbitrarily small $\beta$ (for some small $\eps$ depending on $\beta$), and the number of copies of $H$ is at least $(1-\delta)$ of the expected number. Each of these three components is essentially the best possible.

The conjecture remains open, but several partial results have been obtained. A version with the correct density condition, but only guaranteeing a fixed positive proportion of the expected number and with a failure probability larger than conjectured, was proved in \cite{ONklr}. If one further assumes that $H$ is strictly balanced (\Cref{balance}), the same paper also proves that the correct number of copies up to a $(1-\delta)$ factor can be obtained, again with a failure probability that is too large.
When Saxton and Thomason \cite[Theorem 10.2]{Container} proved the K{\L}R conjecture using the container method, they in fact established a version of the probabilistic counting lemma with the correct density condition and failure probability, but still only guaranteed a fixed positive proportion of the expected number. 
Nenadov \cite{KLRnew} later gave a direct proof of the same statement using induction.

Gerke, Marciniszyn, and Steger \cite{ProbCountLemma} also proved a version with the correct number of copies and the optimal failure probability for complete graphs $K_\ell$, $\ell \ge 3$, but under the stronger density assumption $d \gg n^{-1/(\ell-1)}$ instead of $d \gg n^{-2/(\ell+1)}$. Consequently, for $H = K_3$, the full probabilistic counting conjecture is known to hold. This is the only value of $\ell$ for which the case $H = K_\ell$ has been fully established.

Their key observation in \cite{ProbCountLemma} is that, for almost all $(\eps)$-regular graphs between $\ell$ vertex classes, the induced graphs obtained by restricting to the neighbourhood of a typical vertex behave similarly to $(\eps)$-regular graphs between $\ell-1$ classes of the corresponding sizes (we later generalise this lemma of \cite{ProbCountLemma} to \Cref{lem:gen_small_bad}).
This allows one to restrict attention to the neighbourhood of a single vertex and apply induction, provided that the edge density relative to the reduced vertex classes remains above the value required for the smaller case. However, when the original density is close to the threshold, this condition fails. This is why the result in \cite{ProbCountLemma} applies only to densities in the regime around the square root of the threshold and above. 

After submitting this paper, we became aware that Nenadov had already established the $K_4$, $K_5$, and $K_6$ cases of the probabilistic counting conjecture in his unpublished master's thesis from 2012. Nenadov's argument is long and technical, even in the $K_4$ case. By contrast, although our proof requires a technical generalisation of the main lemma from \cite{ProbCountLemma}, the main idea is more concise and intuitive.
Furthermore, our approach allows the vertex sets to have different sizes. We believe that this additional flexibility may be useful in future extensions to larger complete graphs.
In this paper, we give a shorter proof of the $H=K_4$ case of the probabilistic counting conjecture, \Cref{conj: count}. Our idea is as follows.

Let $X_1$, $X_2$, and $X_3$ be three vertex sets, and suppose $G$ consists of a bipartite graph $G_1$ between $X_1$ and $X_2$ of density $d_1$, together with a bipartite graph $G_2$ between $X_1$ and $X_3$ of density $d_2$. From $G=G_1 \cup G_2$, we construct an auxiliary bipartite graph $G'$ between $X_1$ and the set $X_2 \times X_3$, where $(x_1,(x_2,x_3))$ is an edge of $G'$ if and only if $x_1x_2 \in E(G_1)$ and $x_1x_3 \in E(G_2)$. We show that for any $(\eps,d_1)$-lower-regular $G_1$ (\Cref{def:eps_p_regular}), almost all $(\eps,d_2)$-lower-regular graphs $G_2$ have the property that, provided $|X_1| \gg (d_1 d_2)^{-1}$, the auxiliary graph $G'$ is $(\eps',d_1d_2)$-lower-regular (\Cref{lem:Auxiliary_regular}). 

For $m  \gg (d_1 d_2)^{-1}$, \Cref{thm:low reg_her} tell us that almost all collections of $m$ edges between $X_2$ and $X_3$ are $(\eps'',d_1d_2)$-lower-regular with $X_1$. Note that in practice, we first fix any $\eps''>0$, find $\eps'$ small enough so that the lemma yields our $\eps''$, then choose $\eps$ small enough so that the lemma yields our $\eps'$, and so on.
We can use this to convert the property that most of the $m$ edges between $X_2$ and $X_3$ belong to at least (almost) the expected amount of $K_3$ in $X_1 \cup X_2 \cup X_3$ into the property that most vertices in $X_1$ belong to the expected amount of $k_3$. 

The advantage of this approach is that only the size of the vertex set $X_1$ needs to exceed $d^{-2}$ (when $d_1=d_2=d$), while the sizes of $X_2$ and $X_3$ need only exceed $d^{-1}$ (we also need $m \gg d^{-2}$). For the case $\ell=4$, we consider the vertex sets $V_1, V_2, V_3, V_4$. Rather than ``seeing'' all the edges between all vertex sets, we first leave out the edges between $V_1$ and $V_4$. 
For a typical vertex $v_1 \in V_1$, its neighbourhood in $V_2$, $V_3$, and the entirety of $V_4$ contains the expected number of copies of $K_3$. More importantly, almost all vertices in $V_4$ are also contained in the expected number of copies of $K_3$ (the exception can be different for different vertices in $V_1$). This implies that for most pairs $(v_1,v_4)\in V_1 \times V_4$, the edge $v_1v_4$, if present, will belong to nearly the expected number of $K_4$. Finally, we choose the edge between $V_1$ and $V_4$, and almost all ways to choose $m$ edges between them will give almost the expected number of $K_4$ overall.

There is still a subtle issue to address. The corresponding lemma from \cite{ProbCountLemma} cannot be applied directly to $V_4$ and the neighbourhood of $v_1$ in $V_2 \cup V_3$, since the relevant vertex sets are no longer all of the same size. This is precisely why the generalisation stated as \Cref{lem:gen_small_bad} is needed, together with corresponding extensions of the relevant definitions. Although the proof follows the argument of \cite{ProbCountLemma} closely, it is lengthy and involves several technical modifications to accommodate unequal vertex sets. We therefore present it in a separate section, allowing readers to skip it if they wish.

The rest of this paper is organised as follows. In \Cref{section:def}, we compile existing definitions and known results. In \Cref{section:main}, we prove the $H=K_4$ case of \Cref{conj: count}, assuming \Cref{lem:gen_small_bad} holds. The proof of \Cref{lem:gen_small_bad} is presented in \Cref{section:lemma_proof}. 

\subsection{Definitions}\label{section:def}

\begin{definition} \label{def:eps_p_regular}
For a graph $G=(V,E)$ and two disjoint sets $V_1, V_2\seq V$, we denote the set of edges with one endpoint in $V_1$ and one endpoint in $V_2$ by $E(V_1, V_2)$. The \emph{density} $d(V_1, V_2)$ is defined as 
$d(V_1,V_2) = {|E(V_1, V_2)|}/(|V_1||V_2|).$
We say that the graph induced by $V_1,V_2$ is \emph{$(\eps)$-regular} if for all $V_1' \subseteq V_1$ and $V_2' \subseteq V_2$  with  $|V_1'| \ge \eps |V_1|$ and $|V_2'| \ge \eps |V_2|$, we have
\[
|d(V_1', V_2') - d(V_1, V_2)| \le \eps d(V_1,V_2) .
\]
We say that the graph induced by $V_1,V_2$ is \emph{$(\eps,d)$-lower-regular} if for all $V_1' \subseteq V_1$ and $V_2' \subseteq V_2$  with  $|V_1'| \ge \eps |V_1|$ and $|V_2'| \ge \eps |V_2|$, we have
\[
d(V_1', V_2')\ge (1-\eps)d.
\]
\end{definition}

\begin{definition}\label{def:regular_tuples}
For a graph $H$, let $\G(H, n, m)$ be the family of graphs on the vertex set $V=\bigcup_{x\in V(H)} V_x$, where the sets $V_x$ are pairwise disjoint sets of vertices of size $n$, and edge set $E=\bigcup_{\{x,y\}\in E(H)}E_{xy}$, where $E_{xy} \seq V_x \times V_y$ and $|E_{xy}| = m$. Let $\G(H, n, m, \eps) \seq \G(H, n, m)$ denote the set of graphs in $\G(H, n, m)$ satisfying that each $(V_x\cup V_y,E_{xy})$ is an $(\eps)$-regular graph.
\end{definition}

\begin{definition}\label{balance}
The \emph{2-density} of a graph $H$ is defined as
\[
m_2(H) = \max\left\{\frac{|E(H')|-1}{|V(H')|-2}: H'\subseteq H, |V(H')|\geq 3\right\}.
\] 
A graph $H$ is called \emph{balanced} if 
\[
m_2(H)=\frac{|E(H)|-1}{|V(H)|-2}.
\] 
It is called \emph{strictly balanced} if $m_2(H)>m_2(H')$ for every proper subgraph $H' \subsetneq H$.
\end{definition}

\begin{theorem}\cite{KLRproof,Container}[formally known as K{\L}R-Conjecture \cite{KLR}]\label{thm:KLR}
Let $H$ be a fixed graph and let
\[ 
  \F(H,n,m)= \{G \in \G(H, n, m): H \text{ is not a subgraph of } G\}.
\]
For any $\beta > 0$, there exist constants $\eps > 0$, $C>0$, $n_0>0$ such that for all $m \geq Cn^{2-1/m_2(H)}$ and~$n\geq n_0$, we have
\[
|\F(H,n,m) \cap \G(H, n, m, \eps)| \leq \beta^m {n^2 \choose m}^{|E(H)|}.
\]
\end{theorem}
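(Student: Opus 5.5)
The plan follows the hypergraph container approach of \cite{KLRproof,Container}. Let $\Gamma$ be the blow-up of $H$ with classes $V_x$ of size $n$, where $V_x\times V_y$ is complete for every $\{x,y\}\in E(H)$. Let $\mathcal H$ be the $|E(H)|$-uniform hypergraph on vertex set $E(\Gamma)$ whose hyperedges are the edge sets of canonical copies of $H$, that is, copies with $x\mapsto V_x$. Then $|V(\mathcal H)|=|E(H)|n^2$ and $e(\mathcal H)=n^{|V(H)|}$. A graph $G\in\mathcal F(H,n,m)$ is exactly an independent set of $\mathcal H$ that has $m$ edges in each pair.

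The first step is to check the codegree conditions of the container lemma. For $\sigma\subseteq E(\Gamma)$ spanning a subgraph $H'\subseteq H$ with $j=|E(H')|$ edges, the codegree is $\Delta_j\le n^{|V(H)|-v(H')}$. Choosing $\tau=Kn^{-1/m_2(H)}$ gives $\Delta_j\le c\,\tau^{j-1}e(\mathcal H)/v(\mathcal H)$, by the definition of $m_2(H)$; this is the usual computation. The container theorem (in the Saxton--Thomason form, or the Balogh--Morris--Samotij form) then yields, for any $\delta>0$, a family $\mathcal C$ and fingerprints $S$ with $|S|\le c(\delta)\tau n^2$. Every independent set $G$ satisfies $S(G)\subseteq G\subseteq C(S(G))$, and every container $C$ spans at most $\delta n^{|V(H)|}$ canonical copies of $H$. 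Since $m\ge Cn^{2-1/m_2(H)}$ with $C\gg K c(\delta)$, we have $|S|\le \eta m$ for a small $\eta$. Hence the number of fingerprints is at most $\binom{|E(H)|n^2}{\le \eta m}\le (e|E(H)|n^2/(\eta m))^{\eta m}$.

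The second step is a dense ``removal'' statement for a single container. Apply the dense counting lemma (equivalently, dense regularity plus counting) to $C$: since $C$ has at most $\delta n^{|V(H)|}$ copies, for small $\delta=\delta(\eps)$ there is an edge $xy\in E(H)$ and sets $A\subseteq V_x$, $B\subseteq V_y$ with $|A|,|B|\ge \eps n$ such that $|C\cap(A\times B)|\le \gamma |A||B|$, where $\gamma=\gamma(\eps)$ is tiny. More robustly, one uses a multi-level variant: either some pair of $C$ is sparse on a large rectangle, or all pairs are dense-regular and then $C$ would contain many copies. This is where the choice of $\eps$ against $\beta$ enters.

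The third step counts. Fix $S$ and $C=C(S)$ together with the witnessing $xy,A,B$. Any $G$ with $S\subseteq G\subseteq C$ and $(V_x\cup V_y,E_{xy})$ being $(\eps)$-regular must have $|E_{xy}\cap(A\times B)|\ge(1-\eps)\eps^2 m$. All of these edges lie in $C\cap (A\times B)$, a set of size at most $\gamma n^2$. The number of choices of $E_{xy}$ is therefore at most $\binom{\gamma n^2}{\eps^2m/2}\binom{n^2}{m-\eps^2m/2}\le (c\gamma/\eps^2)^{\eps^2 m/2}\binom{n^2}{m}$. The other pairs are bounded trivially by $\binom{n^2}{m}$. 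Summing over fingerprints gives
\[
|\F\cap\G(H,n,m,\eps)|\le \Big(\tfrac{e|E(H)|n^2}{\eta m}\Big)^{\eta m}\Big(\tfrac{c\gamma}{\eps^2}\Big)^{\eps^2 m/2}\binom{n^2}{m}^{|E(H)|}.
\]

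The main obstacle is balancing this final product. The entropy factor $(n^2/m)^{\eta m}$ must be beaten by $(\gamma/\eps^2)^{\eps^2m/2}$, which forces the order of constants: $\beta$, then $\eps$, then $\gamma$ and $\delta$, then $\eta$, then $C$. The factor $(n^2/m)^{\eta m}$ involves $\log(n^2/m)$, which is not bounded. I would handle this the way Balogh--Morris--Samotij do: first bound $\binom{n^2}{m}^{|E(H)|}$ relative to choices given $S$, since fixing $S\subseteq G$ saves a factor of about $(m/n^2)^{|S|}$, which cancels the fingerprint count. Alternatively, I would use the Saxton--Thomason container version, whose container count has no logarithmic loss.
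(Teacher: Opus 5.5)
The paper does not prove this theorem; it only quotes it from \cite{KLRproof,Container}. Your overall architecture matches those proofs: containers with fingerprints $S\subseteq G$, a dense statement about each container, then counting with the $S\subseteq G$ saving. There is, however, a genuine gap in your second step, together with the order of constants you commit to at the end.

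\textbf{The gap in Step 2.} Your third step only uses that $A\times B$ has area at least $\eps^2n^2$. To reach $\beta^m$ you therefore need $(c\gamma/\eps^2)^{\eps^2/2}\le\beta$, i.e.\ $\gamma\le\eps^2\beta^{2/\eps^2}/c$. No statement of the form ``few canonical copies $\Rightarrow$ some pair has a rectangle with sides $\ge\eps n$ and density $\le\gamma$'' holds for such $\gamma$. Here is a counterexample for $H=K_3$.
\begin{itemize}
\item Take a triangle-free graph $F$ on $M=\Theta(\eps^{-3})$ vertices in which any two disjoint sets of size $\eps M/4$ span an edge. One of Alon's pseudorandom triangle-free graphs works, via the expander mixing lemma.
\item Let $C$ be the tripartite blow-up of $F$ (assume $M\mid n$): $(u,s)\in V_x$ is joined to $(v,s')\in V_y$ iff $uv\in E(F)$.
\item $C$ has no canonical triangle.
\item Yet every $A\subseteq V_x$, $B\subseteq V_y$ with $|A|,|B|\ge\eps n$ satisfies $d_C(A,B)\ge\eps^2/(4M^2)$. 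This is polynomial in $\eps$, hence far above $\eps^2\beta^{2/\eps^2}$.
\item Since $C$ is an independent set of $\mathcal H$, some container contains it and inherits this property.
\end{itemize}
The same example refutes your ``multi-level'' dichotomy: no pair is sparse on a large rectangle, and still there are no copies. The underlying problem is the order of choices. Dense counting needs the scale of the sets to be small compared with the density threshold, but you pick the threshold $\gamma$ after the scale $\eps$.

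\textbf{The missing idea.} The standard proofs reverse this order.
\begin{itemize}
\item Fix $\zeta=\zeta(\beta,H)$, then $\eps_1\ll\zeta$. Apply Szemer\'edi's regularity lemma to the container, giving $t\le T(\eps_1)$ clusters per class.
\item Only then choose $\eps\ll 1/T$, and $\delta\ll T^{-v(H)}$. By dense counting, the reduced graph (edges are $\eps_1$-regular cluster pairs of density $\ge\zeta$) then has no canonical $H$.
\item Hence for some $xy\in E(H)$ at least $t^2/e(H)$ cluster pairs in $V_x\times V_y$ are non-edges. Otherwise a uniformly random choice of one cluster per class would span a canonical $H$ with positive probability.
\item At most $\eps_1t^2$ of these non-edges are irregular. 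The union $U$ of the remaining ones satisfies $|C\cap U|\le\zeta n^2$.
\item Every cluster has size at least $\eps n$, so every $(\eps)$-regular $E_{xy}$ has at least $m/(2e(H))$ edges in $U$.
\end{itemize}
This gives a saving of $(O(e(H)\zeta))^{m/(2e(H))}$ that is independent of $\eps$. Moreover $U$ is determined by $C$, hence by $S$, so it is compatible with your fingerprint sum. Combined with the $S\subseteq G$ saving, this absorbs the fingerprint entropy.

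One correction on that last point: the Saxton--Thomason container count is also of order $\exp(O(\tau\log(1/\tau)n^2))$. So your ``alternative'' does not avoid the logarithm. The $S\subseteq G$ saving is needed there too.
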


\begin{definition}\label{def: degree}
Fix a graph $H$. For a graph $G$ with vertex set $V=\bigcup_{x\in V(H)} V_x$, we say that a copy of $H$ in a graph $G$ is \emph{canonical} if each vertex $x \in V(H)$ belongs to $V_x$. Given an edge $e=(v_a,v_b)\in V_a \times V_b$ for some distinct $a,b \in V(H)$, we denote by $\deg_H(e,G)$ the number of canonical copies of $H$ in $G$ which contain $v_a$ and $v_b$. Similarly, for a vertex $v$, we denote by $\deg_H(v, G)$ the number of canonical copies of $H$ in $G$ which contain $v$.
\end{definition}

\begin{definition}\label{def:forbidden tuples}
Let~$\F(H, n, m, \delta) \seq \G(H, n, m)$ denote the family of graphs that contain fewer than
\[
(1 - \delta)n^{|V(H)|}\left(\frac{m}{n^2}\right)^{|E(H)|}
\]
canonical copies of $H$.   
\end{definition}

\begin{conjecture}[Counting Lemma \cite{ProbCountLemma}]\label{conj: count} 
Let $H$ be a fixed graph. For any $\beta > 0$ and $\delta>0$, there exist constants $\eps > 0$, $C>0$, and~$n_0 >0$ such that for all $m \geq Cn^{2-1/m_2(H)}$ and~$n\geq n_0$, we have
\[ 
|\F(H,n,m,\delta) \cap \G(H, n, m, \eps)| \leq \beta^m {n^2 \choose m}^{|E(H)|}.
\]
\end{conjecture}

Let $\Gamma(v)$ denote the neighbourhood of vertex $v$. When we have partition classes $V_i$ for various $i$, we write $\Gamma_i(v):=\Gamma(v) \cap V_i$.
\begin{proposition}\label{prop:reg_degree}
Let $G=(V_1\cup V_2, E)$ be
an $(\eps)$-regular graph with density $d$, and let
$V_2'\subseteq V_2$ satisfy 
$|V_2'|\geq \eps |V_2|$. Then at most $\lfloor \eps|V_1| \rfloor$ vertices 
$v\in V_1$ do not satisfy
\begin{equation*}
|\Gamma(v)\cap V'_2|\geq  (1-\eps)d |V_2'|, 
\end{equation*}
and at most $\lfloor \eps |V_1|\rfloor$ vertices $u\in V_1$ do not satisfy
\[ 
|\Gamma(u)\cap V'_2|\leq (1+\eps) d  |V_2'|. 
\]
Moreover, let $G'=(V_3\cup V_4, E')$ be an $(\eps,d)$-lower-regular graph, and let $V_4'\subseteq V_4$ satisfy $|V_4'|\geq \eps |V_4|$. Then at most $\lfloor \eps|V_3| \rfloor$ vertices $v\in V_3$ do not satisfy
\begin{equation*}
|\Gamma(v)\cap V'_4|\geq  (1-\eps)d |V_4'|.
\end{equation*}
\end{proposition}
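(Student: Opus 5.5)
We argue by contradiction, in the standard way: collect all the bad vertices into one set and test regularity on that set paired with $V_2'$. For the first assertion, let
\[
B=\{v\in V_1: |\Gamma(v)\cap V_2'|<(1-\eps)d|V_2'|\}.
\]
Suppose $|B|>\lfloor \eps|V_1|\rfloor$. Since $|B|$ is an integer, this gives $|B|\ge \lfloor \eps|V_1|\rfloor+1>\eps|V_1|$, so in particular $|B|\ge \eps|V_1|$. Together with $|V_2'|\ge\eps|V_2|$, the pair $(B,V_2')$ is admissible in the definition of $(\eps)$-regularity (\Cref{def:eps_p_regular}). Hence
\[
d(B,V_2')\ge d(V_1,V_2)-\eps d(V_1,V_2)=(1-\eps)d.
\]
On the other hand, counting edges from $B$ gives
\[
|E(B,V_2')|=\sum_{v\in B}|\Gamma(v)\cap V_2'|<|B|(1-\eps)d|V_2'|,
\]
so $d(B,V_2')<(1-\eps)d$. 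This is a contradiction, since $B\neq\emptyset$ makes the inequality strict. Therefore $|B|\le\lfloor\eps|V_1|\rfloor$.

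The upper bound is the mirror argument. Let
\[
B^+=\{u\in V_1:|\Gamma(u)\cap V_2'|>(1+\eps)d|V_2'|\}.
\]
If $|B^+|>\lfloor\eps|V_1|\rfloor$, then again $|B^+|\ge\eps|V_1|$. Regularity then gives $d(B^+,V_2')\le(1+\eps)d$, while summing degrees gives $d(B^+,V_2')>(1+\eps)d$, a contradiction.

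The lower-regular statement is proved exactly as the first part. Let $B'\subseteq V_3$ be the set of vertices whose degree into $V_4'$ is below $(1-\eps)d|V_4'|$. If $|B'|>\lfloor\eps|V_3|\rfloor$, then $|B'|\ge\eps|V_3|$. The $(\eps,d)$-lower-regularity of $G'$ then gives $d(B',V_4')\ge(1-\eps)d$, which contradicts the strict inequality obtained by summing the degrees of vertices in $B'$.

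No step is a real obstacle. The only point needing care is the integrality remark: exceeding $\lfloor\eps|V_1|\rfloor$ forces $|B|\ge\eps|V_1|$, which is what makes $B$ admissible in the regularity definition.
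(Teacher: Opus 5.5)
Your proof is correct. Collecting the violating vertices into a set $B$, noting that $|B|>\lfloor\eps|V_1|\rfloor$ forces $|B|\ge\eps|V_1|$ so that $(B,V_2')$ is admissible, and contradicting the (lower-)regularity density bound by summing degrees is exactly the standard argument; the paper states this proposition without proof as a routine consequence of \Cref{def:eps_p_regular}, and your write-up supplies that omitted verification.
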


It follows directly from the definition and the triangle inequality that large subsets of vertices inherit the regularity.
We write~$a \simeps{\eps} b$ if~$(1 - \eps) b \leq a \leq (1 + \eps)b$.

\begin{proposition}\label{prop: reg_large_her}
Let $G=(V_1\cup V_2,E)$ be an $(\eps)$-regular graph of density $d$. Then any subset $V_1'$ of $V_1$ of size at least $\alpha|V_1|$ with $\alpha>\eps>0$ induces an $(\eps')$-regular graph with $\eps'=\max\{\eps/\alpha, 2\eps/(1-\eps) \}$ and density $d' \simeps{\eps} d$.

Moreover, let $G'=(V_3\cup V_4, E')$ be an $(\eps,d)$-lower-regular graph. Then any subset $V_3'$ of $V_3$ of size at least $\alpha|V_3|$ with $\alpha>\eps>0$  induces an $(\eps',d')$-lower-regular graph with $\eps'=\max\{\eps/\alpha, 2\eps/(1-\eps) \}$ and $d' \simeps{\eps} d$.
\end{proposition}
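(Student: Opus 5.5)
The plan is a direct reduction to the defining inequality of the original graph. The hypothesis $\alpha>\eps$ lets us apply regularity of $G$ to $V_1'$ itself, and the choice of $\eps'$ ensures that any pair of sets large relative to $(V_1',V_2)$ is also large relative to $(V_1,V_2)$.

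\emph{First part.} Let $d'=d(V_1',V_2)$. Since $|V_1'|\ge\alpha|V_1|\ge\eps|V_1|$ and $|V_2|\ge\eps|V_2|$, $(\eps)$-regularity applied to the pair $(V_1',V_2)$ gives $|d'-d|\le\eps d$, that is, $d'\simeps{\eps}d$. For regularity of the induced graph, take $A\seq V_1'$ and $B\seq V_2$ with $|A|\ge\eps'|V_1'|$ and $|B|\ge\eps'|V_2|$. Since $\eps'\ge\eps/\alpha$, we get $|A|\ge(\eps/\alpha)\alpha|V_1|=\eps|V_1|$. Since $\eps'\ge 2\eps/(1-\eps)\ge\eps$, we get $|B|\ge\eps|V_2|$. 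Hence regularity of $G$ gives $|d(A,B)-d|\le\eps d$. By the triangle inequality,
\[
|d(A,B)-d'|\le |d(A,B)-d|+|d-d'|\le 2\eps d\le \frac{2\eps}{1-\eps}\,d'\le\eps' d',
\]
where we used $d'\ge(1-\eps)d$. This is exactly $(\eps')$-regularity of $(V_1'\cup V_2,E)$ with density $d'$.

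\emph{Lower-regular part.} Here the parameter $d'$ is ours to choose, and I would simply take $d'=d$, which trivially satisfies $d'\simeps{\eps}d$. For $A\seq V_3'$ and $B\seq V_4$ with $|A|\ge\eps'|V_3'|$ and $|B|\ge\eps'|V_4|$, the same size computation as above gives $|A|\ge\eps|V_3|$ and $|B|\ge\eps|V_4|$. Lower-regularity of $G'$ then yields
\[
d(A,B)\ge(1-\eps)d\ge(1-\eps')d',
\]
using $\eps'\ge\eps$.

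I do not expect a real obstacle. The only point to be careful about is in the first part: the reference density changes from $d$ to $d'$, and converting the additive error $2\eps d$ into a relative error with respect to $d'$ is what forces the term $2\eps/(1-\eps)$ in the definition of $\eps'$.
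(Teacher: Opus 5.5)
Your proof is correct, and it is exactly the argument the paper has in mind. The paper gives no written proof, only the remark that the proposition follows directly from the definition and the triangle inequality, which is what you carry out: $\eps'\ge\eps/\alpha$ transfers the size conditions back to $(V_1,V_2)$, and the term $2\eps/(1-\eps)$ absorbs the change of reference density from $d$ to $d'$. Your choice $d'=d$ in the lower-regular part is legitimate, since the statement only asks for some $d'\simeps{\eps} d$ and the actual density of $(V_3',V_4)$ need not be close to $d$. This choice is also consistent with how the paper later uses the proposition.
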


By a probabilistic argument one can show that an~$(\eps)$-regular pair with~$|E(B)|$ edges contains a $(2\eps)$-regular subgraph with exactly~$m \leq |E(B)|$ edges provided~$m \gg |V(G)|$.
\begin{lemma}[\cite{Application} Lemma 4.3]\label{lem:sub_of_eps_is_eps}
For all $0<\eps\leq 1/6$, there exists a constant $C=C(\eps)$ such that any $(\eps)$-regular graph $B=(V_1\cup V_2,E)$ contains a $(2\eps)$-regular subgraph with $m$ edges for all $m$ satisfying $C|V(B)|\leq m \leq |E(B)|$.
\end{lemma}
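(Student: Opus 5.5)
The statement to prove is \Cref{lem:sub_of_eps_is_eps}: every $(\eps)$-regular $B$ contains a $(2\eps)$-regular subgraph with exactly $m$ edges, for every $m$ with $C|V(B)|\le m\le|E(B)|$. The plan is a random sparsification followed by a small deterministic correction. Write $N_i=|V_i|$, $e=|E(B)|$, $d=d(V_1,V_2)$, and $p=m/e$. Choose $E'\seq E$ uniformly among all $m$-subsets; its density is $d'=pd$ exactly. We aim to show that with positive probability, for all $V_1'\seq V_1$, $V_2'\seq V_2$ with $|V_i'|\ge 2\eps N_i$,
\[
|E'(V_1',V_2')| \simeps{\eps'} p\,|E(V_1',V_2')|
\]
for a small auxiliary $\eps'$ (say $\eps'=\eps/10$).

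Given this concentration, the regularity check is routine. Since $2\eps\ge\eps$, $(\eps)$-regularity of $B$ gives $d(V_1',V_2')\simeps{\eps} d$. Hence the $E'$-density of $(V_1',V_2')$ lies in $[(1-\eps')(1-\eps)pd,\,(1+\eps')(1+\eps)pd]$. Because $\eps\le 1/6$, both endpoints are within $2\eps$ of $pd=d'$ once $\eps'$ is small relative to $\eps$; for example $(1+\eps)(1+\eps')\le 1+2\eps$ whenever $\eps'\le \eps/(2(1+\eps))$. The slack between $\eps$ and $2\eps$ is exactly what absorbs the sampling error, so no correction step is actually needed.

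The main obstacle is the concentration step, and we treat it with a union bound over at most $2^{N_1+N_2}=2^{|V(B)|}$ pairs of sets. For a fixed pair, $X=|E'(V_1',V_2')|$ is hypergeometric with mean
\[
\mu = p\,|E(V_1',V_2')| \ge p(1-\eps)d\,4\eps^2N_1N_2 = 4\eps^2(1-\eps)\,m.
\]
Chernoff-type tail bounds hold for the hypergeometric distribution as well, so
\[
\Pr\bigl(|X-\mu|>\eps'\mu\bigr)\le 2\exp\bigl(-\eps'^2\mu/3\bigr)\le 2\exp\bigl(-c\,\eps^4 m\bigr)
\]
for an absolute constant $c>0$.

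With $m\ge C|V(B)|$ and $C=C(\eps)$ large, the failure probability is at most
\[
2^{|V(B)|}\cdot 2e^{-c\eps^4 C|V(B)|}<1.
\]
So some $m$-subset $E'$ satisfies the concentration for all large pairs simultaneously, and the resulting subgraph is $(2\eps)$-regular with exactly $m$ edges. This is the reason the hypothesis takes the form $m\gg|V(B)|$: the exponent in the tail bound must beat the entropy of choosing the subsets.
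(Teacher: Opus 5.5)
Your proof is correct, and it is the standard probabilistic argument the paper points to when it cites this lemma from \cite{Application} without proof (``by a probabilistic argument one can show\ldots''): take a uniformly random $m$-subset of $E$, apply a hypergeometric Chernoff bound with mean at least $4\eps^2(1-\eps)m$, and take a union bound over the at most $2^{|V(B)|}$ pairs $(V_1',V_2')$. One cosmetic point: your computation never uses $\eps\le 1/6$, since both $(1+\eps)(1+\eps')\le 1+2\eps$ and $(1-\eps)(1-\eps')\ge 1-2\eps$ already hold whenever $\eps'\le\eps/(1+\eps)$.
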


\begin{theorem}[\cite{SmallSubsets} Theorem 3.6]\label{thm:low reg_her}
For $0<\beta,\eps'<1$ there exist $\eps=\eps(\beta,\eps')>0$ and
$C=C(\eps')$ such that every $(\eps,d)$-lower-regular graph $G=(V_1\cup V_2,E)$ satisfies that the number of sets $Q \subset V_1$ of size $q=|Q|\geq C d^{-1}$ that form an $(\eps',d)$-lower-regular with $V_2$, is at least
\[
\left(1-\beta^q\right)\binom{|V_1|}{q}.
\]
\end{theorem}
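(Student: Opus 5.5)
The plan is to show that a uniformly random $q$-subset $Q\subseteq V_1$ fails to be $(\eps',d)$-lower-regular with $V_2$ with probability at most $\beta^q$. A witness of failure is a pair $(Q',V_2')$ with $Q'\subseteq Q$, $|Q'|\ge \eps' q$, $V_2'\subseteq V_2$, $|V_2'|\ge\eps'|V_2|$ and $d(Q',V_2')<(1-\eps')d$. The obvious union bound over all $V_2'$ costs $2^{|V_2|}$, which is far too large since $q$ may be as small as $Cd^{-1}$. So the first step is to make the witness $V_2'$ essentially determined by $Q'$, or by a small part of it. I would take $V_2'$ to be the set
\[B(Q')=\{v\in V_2:\ |\Gamma(v)\cap Q'|<(1-\eps'/2)d|Q'|\}.\]
By averaging, a low-density witness $(Q',V_2')$ can be converted into one in which a constant fraction of $V_2'$ has low degree into $Q'$. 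This is done after first discarding from $V_2$ the few vertices of abnormally large degree (controlled by upper estimates on the degrees of $Q'$). Thus failure of $Q$ implies that $|B(Q')|\ge \eps''|V_2|$ for some $Q'\subseteq Q$ of size $t\ge\eps' q$, with $\eps''$ depending only on $\eps'$.

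The second step counts, for fixed $t$, the $t$-sets $Q'\subseteq V_1$ with $|B(Q')|\ge\eps''|V_2|$, and shows that there are at most $(\beta')^t\binom{|V_1|}{t}$ of them. The union bound over the at most $2^q$ choices of $Q'\subseteq Q$ together with the hypergeometric comparison between $\binom{|V_1|}{q}$ and $\binom{|V_1|}{t}$ then gives the bound $\beta^q$ once $\beta'$ is small. To carry out this count I would expose $Q'=\{x_1,\dots,x_t\}$ one vertex at a time and track the vertices of $V_2$ whose degree into $\{x_1,\dots,x_i\}$ is lagging behind the expected value $(1-\eps'/2)di$. At each step the current lagging set $W_i$ is determined by the vertices exposed so far. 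If $|W_i|\ge \eps|V_2|$, then by \Cref{prop:reg_degree} applied to the $(\eps,d)$-lower-regular graph $G$, all but at most $\eps|V_1|$ choices of $x_{i+1}$ have at least $(1-\eps)d|W_i|$ neighbours in $W_i$. Hence each step is ``bad'' with probability at most about $\eps$, and these bad steps can be counted with a binomial tail bound of the form $\binom{t}{\eps''' t}\eps^{\eps''' t}\le(\beta')^t$. On the ``good'' steps the lagging mass of $W_i$ shrinks in expectation, and $dt\ge d\eps' q\ge \eps' C$ large is exactly what makes the degrees into $Q'$ concentrate enough that a linear-size $B(Q')$ at the end forces many bad steps.

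The main obstacle is this second step. Lower-regularity only gives \emph{lower} bounds on degrees into large sets, so the number of vertices of $V_2$ that stay below the threshold cannot be controlled vertex-by-vertex by a clean martingale. The first thing I would try is to work with the weighted quantity $\sum_{v\in V_2}\max\{0,(1-\eps'/2)di-|\Gamma(v)\cap\{x_1,\dots,x_i\}|\}$ as a potential function, with degrees truncated at a constant multiple of $d|W|$ to neutralise high-degree vertices. I would then show that a good step decreases the potential by a definite amount. This should produce the required exponential tail, with $\eps$ chosen as a function of $\beta$ and $\eps'$, and $C$ chosen as a function of $\eps'$ only so that $d|Q'|$ is large enough for the threshold comparisons.
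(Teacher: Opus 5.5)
The paper does not prove this statement; it is quoted from \cite{SmallSubsets}, so your proposal has to stand on its own. Your first step is sound, and simpler than you make it. No discarding of high-degree vertices is needed, which matters because lower-regularity gives no upper degree bounds at all. Since degrees are nonnegative, if $d(Q',V_2')<(1-\eps')d$ then more than $\frac{\eps'}{2}|V_2'|$ vertices of $V_2'$ have fewer than $(1-\eps'/2)d|Q'|$ neighbours in $Q'$. The reduction from $Q$ to $t$-sets $Q'$ is also fine.

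\textbf{The gap is your second step, which is the whole content of the theorem, and the mechanism you propose there fails.} A step counts as good as soon as $x_{i+1}$ sends $(1-\eps)d|W_i|$ edges into the current lagging set $W_i$ \emph{in total}. Nothing forces these edges onto the vertices that will still be lagging at the end.

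Here is a concrete failure. Let $W_i=B\cup R$ with $|B|=\eps''|V_2|$, where the vertices of $R$ have only just dropped below the threshold, so their deficit is at most $d$, and $2d|B|\le |R|\le |B|/2$. Let $x_{i+1}$ be adjacent to exactly $R$.
\begin{itemize}
\item The step is good.
\item Your potential loses at most $d|R|$ on $R$ but gains $(1-\eps'/2)d|B|$ on $B$, so it \emph{increases}.
\item Truncating degrees changes nothing here, since no vertex has large degree.
\end{itemize}
One can keep doing this: always join $x_{i+1}$ to $\lceil (1-\eps)d|W_i|\rceil$ vertices of $W_i\setminus B$ and nothing else. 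Vertices of $V_2\setminus B$ are hit only while lagging, so they keep falling back below the rising threshold and $W_i\setminus B$ stays of linear size. This gives orderings in which every step is good, yet $B$ receives no edge, so $B\seq B(Q')$.

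Such $x_i$ can sit inside an $(\eps,d)$-lower-regular graph: up to $\eps|V_1|$ vertices may miss $B$, and $t\le q$ can be far smaller than $\eps|V_1|$. So ``linear-size $B(Q')$ forces many bad steps'' is false for these test sets. The current lagging sets are dominated by vertices fluctuating around the threshold and need not resemble the final set $B(Q')$. The real obstruction is circularity: $B(Q')$ is determined by the very vertices you want to test against it.

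\textbf{The missing idea is decoupling.} You need a test set that approximates the final low-degree set and is determined by vertices other than the ones being tested. A standard way:
\begin{itemize}
\item Split $Q'$ at random into halves $S$ and $R$, and put $W_S=\{v\in V_2:\deg(v,S)<(1-\eps'/4)d|S|\}$.
\item Since $d|S|\ge \eps' C/3$ is large, hypergeometric tail bounds give a split with $|W_S|\ge\frac12|B(Q')|$ and $e(R,W_S)\le(1-\eps'/8)d|R||W_S|$. A vertex of large degree into $Q'$ lands in $W_S$ only with probability exponentially small in that degree, so such vertices contribute little. This is the only place $C=C(\eps')$ is needed.
\item By \Cref{prop:reg_degree}, at least $\frac{\eps'}{16}|R|$ vertices of $R$ then lie among the at most $\eps|V_1|$ vertices with fewer than $(1-\eps)d|W_S|$ neighbours in $W_S$.
\item Since $W_S$ depends only on $S$, counting pairs $(S,R)$ with \eqref{eq:binomial coefficient 1} and \eqref{eq:binomial coefficient 2} bounds the number of such $Q'$ by $(c\,\eps^{\eps'/32})^{t}\binom{|V_1|}{t}$, with $c$ absolute. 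This is at most $(\beta')^t\binom{|V_1|}{t}$ once $\eps$ is small in terms of $\beta$ and $\eps'$.
\end{itemize}
Your sequential exposure could only be rescued by taking a uniformly random order and proving that prefixes of $Q'$ approximate $B(Q')$. That is exactly this concentration argument.
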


\begin{theorem}[\cite{SmallSubsets} Theorem 3.7]\label{thm:reg_her}
For $0<\beta,\eps'<1$, there exist $ \eps = \eps(\beta,\eps')>0$ and $C=C(\eps')$ such that every $(\eps)$-regular graph $G=(V_1\cup V_2,E)$ with density $d$ satisfies that the number of sets $Q \subset V_1$ of size $q=|Q|\geq C d^{-1}$ that contain a set $\widetilde{Q}$ of size at least $(1 - \eps')|Q|$ forming an $(\eps')$-regular graph of density $d' \simeps{\eps} d$ with $V_2$, is at least 
\[ 
(1-\beta^q)\binom{|V_1|}{q}.
\]
\end{theorem}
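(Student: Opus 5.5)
The plan is to reduce \Cref{thm:reg_her} to the lower-regular version, \Cref{thm:low reg_her}, and to obtain the matching upper bounds on densities from a degree argument combined with a complementation trick. Given $\beta,\eps'$, fix an auxiliary $\eta\ll \eps'^2$ (say $\eta=\eps'^2/100$). Apply \Cref{thm:low reg_her} with parameters $(\beta/2,\eta)$ to obtain $\eps_0$ and $C$, and take $\eps\le \min\{\eps_0,\eta\}$ small enough for the tail estimate below. An $(\eps)$-regular graph of density $d$ is in particular $(\eps,d)$-lower-regular. So all but $(\beta/2)^q\binom{|V_1|}{q}$ sets $Q$ of size $q\ge Cd^{-1}$ are $(\eta,d)$-lower-regular with $V_2$.

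\textbf{Step 1 (removing high-degree vertices).} By \Cref{prop:reg_degree} with $V_2'=V_2$, the set $B\subseteq V_1$ of vertices of degree larger than $(1+\eps)d|V_2|$ satisfies $|B|\le \eps|V_1|$. For a uniformly random $q$-set $Q$, $|Q\cap B|$ is hypergeometric with mean at most $\eps q$. A standard tail bound gives
\[
\Pr\bigl[|Q\cap B|\ge \eta q\bigr]\le \binom{q}{\eta q}\eps^{\eta q}\le (e\eps/\eta)^{\eta q}\le (\beta/2)^q,
\]
once $\eps$ is small in terms of $\eta,\beta$. Set $\widetilde Q=Q\setminus B$, so that $|\widetilde Q|\ge(1-\eta)|Q|\ge(1-\eps')|Q|$. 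Since $|\widetilde Q|\ge(1-\eta)|Q|$, \Cref{prop: reg_large_her} shows that $(\widetilde Q,V_2)$ is still $(2\eta,d)$-lower-regular, up to a harmless change of constants.

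\textbf{Step 2 (upper bounds via complements).} Take $Q'\subseteq\widetilde Q$ and $V_2'\subseteq V_2$ with $|Q'|\ge\eps'|\widetilde Q|$ and $|V_2'|\ge\eps'|V_2|$. The lower bound $d(Q',V_2')\ge(1-2\eta)d$ holds since $\eps'\ge 2\eta$. Every vertex of $Q'$ has degree at most $(1+\eps)d|V_2|$, so $e(Q',V_2)\le(1+\eps)d|Q'||V_2|$. We split into two cases according to the size of $V_2'':=V_2\setminus V_2'$.
\begin{itemize}
\item If $|V_2''|\le\eps'^{2}|V_2|$, then
\[
e(Q',V_2')\le(1+\eps)d|Q'||V_2|\le\frac{1+\eps}{1-\eps'^2}\,d|Q'||V_2'|.
\]
\item Otherwise $|V_2''|\ge\eta|V_2|$, so lower regularity gives $e(Q',V_2'')\ge(1-2\eta)d|Q'||V_2''|$. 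Subtracting,
\[
e(Q',V_2')\le d|Q'|\bigl(|V_2'|+(\eps+2\eta)|V_2|\bigr)\le d|Q'||V_2'|\bigl(1+(\eps+2\eta)/\eps'\bigr).
\]
\end{itemize}
With $\eta\ll\eps'^2$, both cases give $d(Q',V_2')\le(1+\eps'/3)d$. The same computation with $Q'=\widetilde Q$, $V_2'=V_2$ shows that the density $d'$ of $(\widetilde Q,V_2)$ satisfies $d'\simeps{\eps'/3}d$. Hence $|d(Q',V_2')-d'|\le\eps' d'$, which is $(\eps')$-regularity.

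The density claim in the statement is $d'\simeps{\eps}d$ with the \emph{small} $\eps$, and this is tighter than what Step 2 gives. I expect this bookkeeping to be the main obstacle. To obtain it, I would note that the upper side already holds: $d'\le(1+\eps)d$ directly from the degree cap. For the lower side I would rerun \Cref{thm:low reg_her} with its accuracy parameter equal to $\eps$ itself, applied only to the full pair, and choose $\eps$ accordingly. If that fails, I would read the $d'\simeps{\eps}d$ in the statement as $d'\simeps{\eps'}d$.

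A union bound over the two failure events, each of size at most $(\beta/2)^q\binom{|V_1|}{q}$, leaves at least $(1-\beta^q)\binom{|V_1|}{q}$ good sets $Q$, as required.
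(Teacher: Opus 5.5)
The paper does not prove this statement; it quotes it from \cite{SmallSubsets}, so your argument has to stand on its own. Most of it does. You combine \Cref{thm:low reg_her} at accuracy $\eta$ (so $C=C(\eta)$ depends only on $\eps'$), a degree cap obtained by discarding the vertices of $Q$ in $B$, and the identity $e(Q',V_2')=e(Q',V_2)-e(Q',V_2\setminus V_2')$. Together these correctly show that $(\widetilde Q,V_2)$ is $(\eps')$-regular, and the final union bound is fine. Two small repairs are needed:
\begin{itemize}
\item The Case~1 inequality $(1+\eps)/(1-\eps'^2)\le 1+\eps'/3$ is false unless $\eps'$ is fairly small; it fails at $\eps'=1/2$. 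First reduce to, say, $\eps'\le 1/4$. This is harmless because the size condition and $(\eps')$-regularity are both monotone in $\eps'$.
\item In Case~2 you need $|V_2''|\ge 2\eta|V_2|$ to invoke $(2\eta,d)$-lower-regularity. This holds, since $|V_2''|>\eps'^2|V_2|=100\eta|V_2|$.
\end{itemize}

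The genuine gap is the clause $d'\simeps{\eps}d$, where $\eps$ is the regularity parameter of $G$ itself. You only obtain $(1-2\eta)d\le d'\le(1+\eps)d$, and neither remedy you suggest works.

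Re-applying \Cref{thm:low reg_her} with accuracy parameter $\eps$ is circular. Called with accuracy $\eps$, that theorem requires $G$ to be $(\eps_*,d)$-lower-regular for some $\eps_*=\eps_*(\beta,\eps)$, which is in general much smaller than $\eps$. It would also make $C$ depend on $\eps$, hence on $\beta$, contradicting $C=C(\eps')$.

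Reading the clause as $d'\simeps{\eps'}d$ proves a weaker statement than the one given. Moreover, the paper's own application in the proof of \Cref{l:tool2} is written with the sharp form. There it uses $d(\tX_{k+1}^j,V_j)\ge(1-\eps)d$ and $d(\tX_{k+1}^j,X'_j)\ge(1-\eps_1)d(V_{k+1},X'_j)$, with the input regularity parameter as the tolerance.

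The missing idea is simply to discard low-degree vertices as well.
\begin{itemize}
\item By \Cref{prop:reg_degree} with $V_2'=V_2$, at most $\eps|V_1|$ vertices of $V_1$ have degree below $(1-\eps)d|V_2|$, and at most $\eps|V_1|$ have degree above $(1+\eps)d|V_2|$.
\item Let $B$ be the union of these two sets, so $|B|\le 2\eps|V_1|$. Your tail bound becomes $(2e\eps/\eta)^{\eta q}\le(\beta/2)^q$ for $\eps$ small in terms of $\beta$ and $\eta$.
\item Every vertex of $\widetilde Q=Q\setminus B$ now has degree in $[(1-\eps)d|V_2|,(1+\eps)d|V_2|]$. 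Since $d'=d(\widetilde Q,V_2)$ is the average of these degrees divided by $|V_2|$, the bound $d'\simeps{\eps}d$ is immediate.
\end{itemize}
Step~2 goes through unchanged, and even simplifies because now $d'\ge(1-\eps)d$.
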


Finally, we collect several auxiliary results that will be used throughout the paper. 
If $0 \leq x \leq 1$, then
\begin{align}\label{eq:binomial coefficient 1}
  {xa \choose b} &\leq {a \choose b} x^b . \\
\intertext{If~$b \leq a$ and both are integral, then}
	\label{eq:binomial coefficient 2}
  {a \choose b - c}{a \choose c} &\leq 4^b {a \choose b} .\\
\intertext{For all integral $a$, $b$, $c$, and~$d$, we have}
	\label{eq:binomial coefficient 3}
	{a \choose b}{c \choose d}&\leq {a+c \choose b+d} .
\end{align}

\section{Main}\label{section:main}
In our proof, we require a version of the lemma from \cite{ProbCountLemma} in which the vertex sets $V_x$ are allowed to have different sizes, which in turn necessitates corresponding generalisations of earlier definitions. \Cref{def:gen_family} through \Cref{lem:gen_small_bad} provide these extensions. \Cref{def:Aux graph} to \Cref{lem:small family} will show that the family of graphs that don't have our desired property is small, with \Cref{lem:Auxiliary_regular} being our key result. Finally, combining \Cref{lem:gen_small_bad} and \Cref{lem:small family}, we prove the $H=K_4$ case of \Cref{conj: count}.

First, we define a blow-up of a graph where the vertex sets do not necessarily have the same size. 
\begin{definition}\label{def:gen_family}
For a graph $H$ with $\ell=|V(H)|$, and an $\ell \times\ell$ symmetric matrix $D$ with entries in $(0,1]$, let $\G(H, n_1,\dots,n_\ell, D)$ be the family of graphs on the vertex set $V=\bigcup_{x=1}^{\ell} V_x$, where the sets $V_x$ are pairwise disjoint sets of vertices of size $|V_x|=n_x$ for $x=1,\dots,\ell$, and edge set $E=\bigcup_{\{x,y\}\in E(H)}E_{xy}$, where $E_{xy} \seq V_x \times V_y$ and $|E_{xy}| = D_{x,y} n_x n_y$. 
Let $\G(H, n_1,\dots,n_\ell, D, \eps) \seq \G(H, n_1,\dots,n_\ell, D)$ denote the set of graphs in $\G(H, n_1,\dots,n_\ell, D)$ satisfying that each $(V_x\cup V_y,E_{xy})$ is an $(\eps)$-regular graph. 
\end{definition}

Usually, $D$ will be a constant matrix with all entries being $d$. In that case, we may write $\G(H, n_1,\dots,n_\ell, d)$ and $\G(H, n_1,\dots,n_\ell, d, \eps)$ instead. 
For convenience, we write $\G(\ell, n_1,\dots,n_\ell, D)$ instead of $\G(K_\ell, n_1,\dots,n_\ell, D)$ and $\G(\ell, n_1,\dots,n_\ell, D, \eps)$ instead of $\G(K_\ell, n_1,\dots,n_\ell, D, \eps)$. 
For an edge $e \notin E(H)$ with endpoints $a,b$ and $G\in\G(H,n_1,\dots,n_\ell,D)$, we say that a graph $G'\in\G(H+e,n_1,\dots,n_\ell,D)$ is an \emph{extension} of $G$ if $G$ can be obtained from $G'$ by deleting all edges between $V_a$ and $V_b$.

Later, we will construct graphs that do not contain some desired properties. To do so, we need to be slightly careful about the order in which we extend the graphs. The next definition makes this more precise.
\begin{definition}
Let $H = K_\ell$ with vertex set $v_1,\dots,v_\ell$.
For $1\le k\le \ell-1$, let $E_k(H)$ denote the set of edges between
$v_{k+1}$ and $v_1,\dots,v_k$.

A sequence of graphs
\[
H_0 \subset H_1 \subset \dots \subset H_{\binom{\ell}{2}}
\]
is called \emph{valid for $H$} if the following hold.

\begin{enumerate}[(i)]
\item $H_0=\emptyset$ and $H_{\binom{\ell}{2}}=H$.

\item For each $1\le i\le \binom{\ell}{2}$ we have
\[
H_i = H_{i-1} + e_i
\]
for some $e_i\in E(H)$.

\item The edges are pairwise distinct.

\item For every $1\le k<\ell$ and
\[
\binom{k}{2} < i \le \binom{k+1}{2},
\]
the edge $e_i$ lies in $E_k(H)$.
\end{enumerate}

For $1\le k\le \ell-1$ and $1\le i\le k$, write
\[
H_{k,i} := H_{\binom{k}{2}+i},
\qquad
e_{k,i} := e_{\binom{k}{2}+i}.
\]
Then each $e_{k,i}$ has the form
\[
e_{k,i}=\{v_{\sigma_k(i)},v_{k+1}\},
\]
where $\sigma_k$ is a permutation of $\{1,\dots,k\}$.
\end{definition}

In other words, a valid sequence builds $K_\ell$ from the empty graph by
adding edges one at a time so that all edges incident to $v_{k+1}$ are added
only after all edges among $\{v_1,\dots,v_k\}$ have already appeared.

Next, we define a generalisation of what it means for a family of graphs to be \emph{small}.
Given two $k\times k$ matrices $D$ and $D'$, we write $D \ge D'$ if
$D_{i,j} \ge D'_{i,j}$ for all $1 \le i,j \le k$.

\begin{definition}\label{def:gen_small} 
Fix $\ell$ and let $H=K_\ell$.  
Let $\mathbb H: H_0 \subset H_1 \subset \dots \subset H_{\binom{\ell}{2}}$
be a valid sequence for $H$.

A family
\[
\B(H,n_1,\dots,n_\ell,D)
\subseteq
\G(H,n_1,\dots,n_\ell,D)
\]
is said to be \emph{small} with respect to a density matrix $D_0$, functions $n_1(n),n_2(n),\dots,n_\ell(n)$ and the sequence $\mathbb H$ if the following holds.

For every $\beta>0$ there exist constants
\[
n_\beta\in\mathbb N,\qquad C_\beta>0,\qquad \eps_\beta>0
\]
and families
\[
\B(H_{k,i},n_1,\dots,n_\ell,D)
\subseteq
\G(H_{k,i},n_1,\dots,n_\ell,D)
\qquad (1\le k\le \ell-1,\;1\le i\le k)
\]
such that the following properties hold.

Let
\[
\B(H_{k,i},n_1,\dots,n_\ell,D,\eps):= \B(H_{k,i},n_1,\dots,n_\ell,D)
\cap \G(H_{k,i},n_1,\dots,n_\ell,D,\eps),
\]
and write $m_{a,b}=D_{a,b}n_an_b$. For all
\[
0<\eps\le\eps_\beta,\qquad
n\ge n_\beta,\qquad
D\ge C_\beta D_0,
\]
the following conditions hold.
\begin{enumerate}[(i)]
\item\label{def:gen_small i}
\[
|\B(H_{1,1},n_1,\dots,n_\ell,D,\eps)| \le \beta^{m_{1,2}} \binom{n_1n_2}{m_{1,2}}.
\]

\item\label{def:gen_small ii} For $1 \le k \le \ell-1$ and $1 \le i \le k$ such that $(i,k) \neq (1,1)$, 
\[
\begin{split}
G \in \G(H_{k,i-1},n_1,\dots,n_\ell,D,\eps) \setminus \B(H_{k,i-1},n_1,\dots,n_\ell,D,\eps) \; \Rightarrow\; \\
\left|\left\{\, G' \in \B(H_{k,i},n_1,\dots,n_\ell,D,\eps) :\, G' \text{ extends } G \,\right\}\right|
\le \beta^{m_{\sigma_k(i),k+1}} \binom{n_{\sigma_k(i)} n_{k+1}}{m_{\sigma_k(i),k+1}} .
\end{split}
\]
\end{enumerate}

We say that $\B(H,n_1,\dots,n_\ell,D)$ is \emph{small with respect to $D_0$}
if it is small with respect to $D_0$, $n_1,n_2,\dots,n_\ell$ and some valid sequence $\mathbb H$ of $H$. If $D_0$ is the constant matrix with density $d_0$, we say that the family is
\emph{small with respect to $d_0$}.
\end{definition}

\begin{remark}
Usually, $D_0$ is a constant matrix with entries $d_0=n^{-1/m_2(H)}$. Moreover, as a result of \Cref{def:gen_small}, 
we have
\[
|\B(H,n_1,\dots,n_\ell,D,\eps)| \le (2\beta)^{\min m_{a,b}}|\G(H,n_1,\dots,n_\ell,D,\eps)|.
\]
If a family of graphs is small in the sense of the original definition in \cite{ProbCountLemma}, then [\cite{ProbCountLemma} Claim 16] shows that it is also small in our definition with $D_0$ a constant matrix with entries $d_0=m_0(n)/n^2$.
\end{remark}

Now we can precisely state our generalised version.
\begin{lemma}\label{lem:gen_small_bad}
Let $K_\ell-e$ be the graph obtained by deleting the edge $(1,2)$ from the complete graph on vertices $1,2,\dots,\ell$. Let $x_2=n$, and $x_i=x$ for $3\le i \le \ell$, let $d_0(x_2,\dots,x_\ell)$ be a monotone increasing function and let $\B(\ell - 1, x_2,\dots,x_{\ell},d')$ be small with respect to $d_0$. 
For all $\beta > 0$ and $\eps' > 0$, there exist constants $\eps > 0$ and $C$ such that for $m,n,x$ and $d'$ satisfying,
\[
x = (1 - \eps')\frac{m}{n}\enspace, \quad 
\frac{m}{n^2} \ge C d_0(x) \ge \frac{1}{x}\enspace, \quad
m \geq 2 n^{3/2} \sqrt{\log n}\enspace, \quad
(1 - \eps')\frac{m}{n^2} \ge d' \ge  \eps'\frac{m}{n^2}
\enspace,
\] 
and~$n$ sufficiently large, all but at most $\beta ^m{n^2\choose m}^{{\ell\choose 2}-1}$ graphs $G \in \G(K_\ell-e,n,m,\eps)$ satisfy the following property: there exist at least $(1 - \eps')n$ vertices $v_1 \in V_1$ such that $V_2 \cup \left(\cup_{i=3}^\ell \Gamma_1(v_i) \right)$ contains a member of the family
\[
\G\left(\ell-1, x_2,\dots,x_\ell,d' ,\eps'\right) \setminus \B(\ell-1, x_2,\dots,x_\ell,d') 
\]
as a subgraph.
\end{lemma}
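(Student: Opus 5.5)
We follow the strategy of the corresponding lemma in \cite{ProbCountLemma}, adapted to unequal class sizes. Fix $\beta,\eps'$ and choose a hierarchy of constants $\eps\ll\eps_1\ll\eps'$ together with a parameter $\beta_1\ll\beta$, which will be fed into the smallness of $\B(\ell-1,\dots)$. The plan is to expose the graph $G\in\G(K_\ell-e,n,m,\eps)$ in two stages. First we choose the edges among $V_2,\dots,V_\ell$. Then we choose the edges between $V_1$ and $V_3,\dots,V_\ell$, one bipartite graph $E_{1,j}$ at a time, in the order dictated by the valid sequence $\mathbb H$ of the small family.

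The first step is to control degrees and regularity of the neighbourhood sets. By \Cref{prop:reg_degree}, for all but $\ell\eps n$ vertices $v_1$ we have $|\Gamma_j(v_1)|\simeps{\eps}m/n$ for every $j\ge3$. We then trim to subsets of size exactly $x=(1-\eps')m/n$. Next we must show that, for most $v_1$, the sets $\Gamma_j(v_1)$ behave like random subsets of size $x$. Whenever $V_j'$ is such a set, the pairs $(V_2,V_j')$ and $(V_i',V_j')$ have to be $(\eps_1)$-regular with density close to $m/n^2$. For the pair $(V_2,V_j')$ this follows from \Cref{thm:reg_her} applied with $Q=V_j'$, since $x\ge C(m/n^2)^{-1}$. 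The same theorem, applied twice, handles $(V_i',V_j')$. Finally \Cref{lem:sub_of_eps_is_eps} lets us thin each pair to exactly $d'x_ax_b$ edges while keeping it $(2\eps_1)$-regular. We fix this sparsification canonically, for instance via a fixed ordering, so that the resulting graph is a well-defined member of $\G(\ell-1,x_2,\dots,x_\ell,d',\eps')$.

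The core counting step is as follows. Call $v_1$ \emph{bad} if the resulting graph lies in $\B$ or fails to be regular. We bound the number of $G$ with at least $\eps' n$ bad vertices. Since the counts of subsets $Q$ from \Cref{thm:reg_her} carry failure probability $\beta_1^{q}$ with $q\approx m/n$, a union bound over $\eps'n$ vertices gives a factor of roughly $\beta_1^{\eps' n\cdot m/n}=\beta_1^{\eps' m}$. For membership in $\B$, we use smallness condition~\eqref{def:gen_small ii} of \Cref{def:gen_small}, revealing the neighbourhoods vertex by vertex. For a fixed set $S$ of $\eps'n$ bad vertices we count the choices of their neighbourhoods. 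The $\binom{|V_j|}{x}$ ways to choose the neighbourhood of $v_1$ in $V_j$ translate, through the small family's extension bounds, into at most a $\beta_1^{\Theta(m/n)}$ fraction of choices per bad vertex. Crucially, the bad sets for different $v_1$ are chosen independently given the edges among $V_2,\dots,V_\ell$. We then sum over $S$ with $\binom{n}{\eps'n}\le 2^n$, and here the assumption $m\ge 2n^{3/2}\sqrt{\log n}$ lets this entropy factor be absorbed. Finally we pass from ``each vertex has a neighbourhood of prescribed size'' back to ``$E_{1j}$ has exactly $m$ edges''. This uses the binomial estimates \eqref{eq:binomial coefficient 1}–\eqref{eq:binomial coefficient 3}, together with the fact that degree sequences concentrate: the number of degree profiles is $n^{O(n)}$, and this factor is negligible against $\beta^m$.

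The main obstacle is the smallness step. The definition of small bounds extensions of a fixed $G$ edge-set by edge-set, namely the sets $E_{\sigma_k(i),k+1}$ in the induced $(\ell-1)$-partite graph. In our setting, however, the parts $\Gamma_j(v_1)$ themselves are random. Choosing $\Gamma_j(v_1)$ simultaneously determines the vertex set and, through the fixed edges among the $V_j$, all the induced edges. I would therefore mimic \cite{ProbCountLemma}, Claim 16. We reveal the classes $V_3',V_4',\dots$ in the order of $\mathbb H$ and condition on the earlier classes giving a non-bad partial graph. The aim is to show that a uniformly random $x$-subset $V_{k+1}'$ produces a bad extension with probability at most $\beta_1^{x d'\cdot(\text{const})}$, which equals $\beta_1^{\Theta(m^2/n^3)}$ and hence $\beta_1^{\Theta(m/n)}$ in the relevant range. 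This transfer needs the double-counting argument in which each bad extension edge set arises from many vertex subsets. It is exactly where $D\ge C_\beta D_0$ and the condition $m/n^2\ge Cd_0(x)$ enter, and it is the step I expect to require the most care.
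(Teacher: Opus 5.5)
Your outer framework matches the paper's proof. You fix the edges among $V_2,\dots,V_\ell$, use that the neighbourhoods of different $v_1$ are independent given the degree profile, pay $2^{o(m)}$ for the degree profiles and the set of bad vertices, and extract a factor per bad vertex. The regularity part via \Cref{thm:reg_her} is also fine. The gap is exactly the step you flag as the main obstacle, and as written it fails.

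First, $\beta_1^{\Theta(xd')}=\beta_1^{\Theta(m^2/n^3)}$ is not $\beta_1^{\Theta(m/n)}$. Indeed $xd'=\Theta(d\cdot m/n)$ with $d=m/n^2\to 0$ (e.g.\ $d=Cn^{-2/5}$ for $K_4$). So over $\eps' n$ bad vertices you collect only $\beta_1^{\Theta(dm)}=\beta_1^{o(m)}$, which gives no $\beta^m$ bound for a constant $\beta_1$.

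Second, no double-counting of the kind you describe can repair this. Smallness bounds the number of bad \emph{edge sets} extending a fixed non-bad graph. A single bad extension costs only $\hbeta^{y_{a,b}}$ with $y_{a,b}=d'x_ax_b\le d'nx=\Theta(dm)=o(m)$. What you need is that all but a $\hbeta^{\Theta(m)}$-fraction of host graphs on $V_2\cup\dots\cup V_\ell$ have only a $\hbeta^{\Theta(x)}$-fraction of bad \emph{vertex-subset} tuples. A first-moment count over host graphs followed by Markov's inequality excludes host graphs only with probability of order $\hbeta^{O(dm)}$, not $\hbeta^{\Theta(m)}$.

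The missing idea is the concentration argument of \Cref{l:tool2}. To accumulate an exponent of order $m$ you need about $\alpha^2 n^2/(x_ax_b)$ \emph{edge-disjoint} bad extensions, so that the smallness bounds multiply. The paper takes a maximal family of vertex-disjoint good partial graphs $G_k^j$, each with $\alpha n/x_{k+1}$ pairwise disjoint bad extension sets. \Cref{hl3:tool2} shows that only a $\bigl(2^{2\ell\alpha+\ell^2+3}\hbeta^{\alpha^2\eps'/k}\bigr)^m$-fraction of host graphs admit $\alpha n/x'$ such $G_k^j$.

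This is where $x\ge\log n/d$, i.e.\ $m\ge 2n^{3/2}\sqrt{\log n}$, is actually used: to pay for choosing the disjoint vertex sets. It is not needed for absorbing $\binom{n}{\eps' n}\le 2^n$, which only requires $m\ge n$.

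Consequently, typical host graphs have exceptional sets $W_i$ with $|W_i|\le(\ell-1)\alpha n$. Every good $G_k$ avoiding them has a set $W$ with $|W|\le\alpha n$ outside which no $x$-set carries a bad extension (\Cref{chl3:tool2}). The per-vertex exponent $\Theta(x)=\Theta(m/n)$ then comes from two sources. By \Cref{hl4:tool2}, a uniformly random $(1+\eps')x$-set meets $W\cup W_{k+1}$ in more than $\eps' x/\ell$ vertices with probability at most $(\beta/2\ell)^x$. \Cref{thm:reg_her} handles regularity with failure probability $(\beta/2\ell^2)^{(1+\eps')x}$.

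This also shows why trimming canonically to exactly $x$ vertices cannot work: a random $x$-set avoids a set of size $\alpha n$ only with probability about $(1-\alpha)^x$. The choice of $x$-subset must depend on the host graph. This is why the paper works with $(1+\eps')x$-subsets of the neighbourhoods, which is possible since $(1-\eps)m/n\ge(1+\eps')x$. The conclusion then only requires that \emph{some} $x$-subset induce a member of $\G\setminus\B$.
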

We deferred the proof of \Cref{lem:gen_small_bad}, which is a generalisation of the original version, to \Cref{section:lemma_proof}.

\begin{definition}\label{def:Aux graph}
Let $G=X_1\cup X_2\cup X_3$ be a graph with pairwise disjoint vertex sets $X_1, X_2, X_3$. We create an auxiliary graph $A(G)$ as a bipartite graph between the vertex sets $X_1$ and $Y=X_2\times X_3$ with the edge set $E(A(G))=\{(x_1,y) \; : \; y=(x_2,x_3)\in Y, x_i\in X_i, (x_1,x_2),(x_2,x_3) \in E(G)\}.$ We called $A(G)$ the \emph{path-Aux graph between $X_1$ and $X_2 \times X_3$}.
\end{definition}

Our key observation is that, under suitable conditions, most path-Aux graphs are lower-regular.
\begin{lemma}\label{lem:Auxiliary_regular}
For $0 < \beta,\eps' < 1$, there exist constants $\eps,C>0$ such that the following holds. Let $X_1$, $X_2$, and $X_3$ be three vertex sets of size $n_1,n_2,n_3$, respectively. Let $d_1,d_2$ satisfy
\[
n_1\ge \frac{C}{d_1d_2} \quad, \enspace n_2 \ge \frac{C}{d_1} \quad, \enspace n_3 \ge \frac{C}{d_2}.
\]
Then for any $(\eps,d_1)$-lower-regular graph $G_1$ between $X_1$ and $X_2$, at most $\beta^{d_2n_1n_3} \binom{n_1n_2}{d_2n_1n_3}$ $(\eps,d_1)$-lower-regular graphs $G_2$ between $X_1$ and $X_3$ have a property that the path-Aux graph between $X_1$ and $X_2 \times X_3$ is not $(\eps',d_1d_2)$-lower-regular.
\end{lemma}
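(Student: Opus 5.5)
The plan is to prove the statement as intended: $G_2$ is $(\eps,d_2)$-lower-regular with $d_2n_1n_3$ edges, and the bound is $\beta^{d_2n_1n_3}\binom{n_1n_3}{d_2n_1n_3}$. I will reduce lower-regularity of the path-Aux graph to two ingredients. The first is a deterministic averaging argument over the fibres of the set $B\subseteq X_2\times X_3$. The second is a probabilistic statement saying that, for a random lower-regular $G_2$, the ``traces'' $A\cap \Gamma(b)$ with $b\in X_2$ inherit lower-regularity towards $X_3$.

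\textbf{Reduction to fibres.} Fix $A\subseteq X_1$ with $|A|\ge \eps'n_1$ and $B\subseteq X_2\times X_3$ with $|B|\ge \eps' n_2n_3$. For $b\in X_2$ let $B_b=\{c:(b,c)\in B\}$ and $S_b=A\cap\Gamma_{G_1}(b)$. Then
\[
e_{A(G)}(A,B)=\sum_{b\in X_2} e_{G_2}(S_b,B_b).
\]
Applying \Cref{prop:reg_degree} to $G_1$ (lower-regular) from the $X_2$ side, with $V_4'=A$, gives $|S_b|\ge(1-\eps)d_1|A|$ for all but $\eps n_2$ vertices $b$. Fibres with $|B_b|<\eps''n_3$ contribute at most $\eps''n_2n_3\ll|B|$ to $|B|$. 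Hence it suffices to know that, for all but $\eps''n_2$ vertices $b$, the pair $(S_b,X_3)$ is $(\eps'',d_2)$-lower-regular in $G_2$. Given that, for each such $b$ with a large fibre we get $e(S_b,B_b)\ge(1-\eps'')d_2|S_b||B_b|$, and summing yields $e(A,B)\ge(1-\eps')d_1d_2|A||B|$ once $\eps,\eps''$ are small in terms of $\eps'$. (If some $|S_b|$ exceeds the average, I pass to subsets of size exactly $(1-\eps)d_1|A|$ so that no upper bound on degrees is needed.) Note that $|S_b|\ge(1-\eps)\eps' d_1 n_1\ge C'/d_2$ by the hypothesis $n_1\ge C/(d_1d_2)$. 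This is exactly the size regime of \Cref{thm:low reg_her}, and it is why that hypothesis is needed.

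\textbf{Probabilistic step.} Fix a set $Q\subseteq X_1$ of size $q\ge C'/d_2$. The class of $(\eps,d_2)$-lower-regular graphs between $X_1$ and $X_3$ with $d_2n_1n_3$ edges is invariant under permutations of $X_1$. So by double counting and \Cref{thm:low reg_her}, the proportion of such $G_2$ for which $Q$ fails to be $(\eps'',d_2)$-lower-regular with $X_3$ equals the proportion of bad $q$-sets in a fixed $G_2$, which is at most $\beta_0^{\,q}$. A bad $G_2$ must then admit some $A$ (at most $2^{n_1}$ choices) and a set $T\subseteq X_2$ of at least $\eps''n_2$ vertices $b$ with $S_b$ bad (at most $2^{n_2}$ choices). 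Since $\beta_0^{\,q}$ with $q\approx d_1|A|$ and $|T|\ge\eps''n_2$ heuristically multiply to $\beta_0^{\Omega(d_1n_1n_2)}$, the intended target is $\beta^{d_2n_1n_3}$ after choosing $\beta_0$ small. This uses $n_2\ge C/d_1$ and $n_3\ge C/d_2$ to absorb the $2^{n_1+n_2}$ union bound and to compare $d_1n_1n_2$ with $d_2n_1n_3$.

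\textbf{Main obstacle.} The events ``$S_b$ is bad'' for different $b\in T$ are not independent, so I cannot simply multiply the bounds $\beta_0^{\,q}$. My first attempt will be to make the argument joint rather than per-set. I will revisit the proof of \Cref{thm:low reg_her} (a counting of witnesses $S_b'\subseteq S_b$, $W_b\subseteq X_3$ with too few edges) and encode a bad $G_2$ by listing, for each $b\in T$, its deficient witness pair. I will then count the $G_2$ with a prescribed edge deficiency on the union of these witnesses, using a hypergeometric tail for $\sum_{b\in T} e(S_b',W_b)$, which is a sum over a structure of total size $\gtrsim \eps'' n_2\cdot d_1 n_1\cdot \eps'' n_3$. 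If overlaps among the $S_b$ spoil this, the fallback is to greedily extract from $T$ a subfamily with nearly disjoint traces, which should be possible since $\sum_b|S_b|\approx d_1n_2|A|$ while each $S_b$ is only a $d_1$-fraction of $A$, and then apply independence-like estimates there. Balancing this tail estimate against the union bound, while keeping the exponent of order $d_2n_1n_3$, is the technical heart of the proof.
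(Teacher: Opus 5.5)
\textbf{There is a genuine gap: the probabilistic step is not just unfinished; in the direction you set it up, it cannot reach the required exponent.} Your fibre decomposition over $b\in X_2$ is fine as a deterministic reduction. The problem is that it points the probability the wrong way. You test the \emph{fixed} sets $S_b=A\cap\Gamma_{G_1}(b)$ against the \emph{random} graph $G_2$.

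\Cref{thm:low reg_her} bounds the failure probability of a single $S_b$ by $\beta_0^{|S_b|}$, which is exponential in the number of \emph{vertices} of $S_b$. So even with perfect independence, $\eps''n_2$ failures give an exponent of order $d_1n_1n_2$, not $d_2n_1n_3$. Your assertion that $n_2\ge C/d_1$ and $n_3\ge C/d_2$ let you compare these is false. Both quantities are merely at least $Cn_1$; for example, $n_2=C/d_1$ with $d_2n_3\to\infty$ gives $d_1n_1n_2\ll d_2n_1n_3$. Likewise, the $2^{n_2}$ union bound over $T$ is not controlled by $\beta^{d_2n_1n_3}$ when $n_2$ is large.

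Independence also genuinely fails. The $n_2$ sets $S_b$ are $d_1$-fractions of the single set $A$. Nothing in the lower-regularity of $G_1$ prevents $X_2$ from splitting into $O_\eps(1/d_1)$ classes of twin vertices; a random construction of this shape is lower-regular. In that case only $O_\eps(1/d_1)$ of your events are distinct, and the most \Cref{thm:low reg_her} can give is $\beta_0^{O_\eps(n_1)}$. Rescuing this direction would need a per-set bound exponential in the number of \emph{edges}, $|S_b|d_2n_3$, for a random $G_2$, together with a joint count over overlapping witnesses. That is essentially a new theorem, not a refinement of the one you cite.

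\textbf{The missing idea is to fibre over $X_3$ instead:}
\[
e_{A(G)}(A,B)=\sum_{c\in X_3} e_{G_1}\bigl(A\cap\Gamma_{G_2}(c),\,B^c\bigr),\qquad B^c=\{b:(b,c)\in B\}.
\]
\begin{enumerate}
\item Now the random objects are the neighbourhoods $\Gamma_{G_2}(c)\cap X_1'$. Conditioned on the degree sequence of $X_3$ into $X_1'$, these are \emph{independent} uniformly random sets of the prescribed sizes.
\item \Cref{thm:low reg_her} is applied to the \emph{fixed} graph $G_1[X_1'\cup X_2]$. This is where $n_1\ge C/(d_1d_2)$ is used: a typical $c$ has at least $(1-\eps_1)^2\eps_1d_2n_1$ neighbours in $X_1'$. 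That is large enough for the theorem in $G_1$, so the neighbourhood fails to be lower-regular with $X_2$ with probability at most $\beta_0^{\deg(c)}$.
\item Having $\eps_1n_3$ such failures then costs at most $2^{n_3}\beta_0^{\Omega(\eps_1^2d_2n_1n_3)}$. This is exactly the exponent $m_2=d_2n_1n_3$. The union bound over $X_1'$ costs only $2^{n_1}\le 2^{m_2/C}$.
\item For each $c$ with a good neighbourhood, \Cref{prop:reg_degree} applied in $G_1$ gives $(1-\eps_1)n_2$ pairs $(b,c)$ of $A(G)$-degree at least $(1-3\eps_1)d_1d_2|X_1'|$ into $X_1'$.
\item Hence all but $3\eps_1|Y|$ vertices of $Y$ have high degree into $X_1'$. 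Lower-regularity for every $Y'$ with $|Y'|\ge\eps'|Y|$ then follows with no further probabilistic input.
\end{enumerate}
This is the route the paper takes.
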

\begin{proof}
Let $\eps_1, \beta_0$ be small enough so that 
\[
(1-3\eps_1/\eps')(1-3\eps_1) \ge (1-\eps') \quad, \enspace 4\beta_0 ^{(1-\eps_1)\eps_1^2} <\beta.
\]
Apply \Cref{thm:low reg_her} with $\eps' \leftarrow \eps_1, \beta \leftarrow \beta_0$ yields constants $\eps \leftarrow \eps_2, C_0 \leftarrow C$.  
We can assume $\eps_2 \le \eps_1 < 1/4$. Set
\[
C := \frac{C_0}{(1-\eps_1)^3\eps_1}\quad, \enspace \eps := \eps_2 \eps_1 \le \frac{\eps_2}{4}
\]
This implies that
\[
\eps_2 \ge \max\{\eps/\eps_1, 2\eps/(1-\eps) \} \quad, \enspace \eps \le \eps_1.
\]

Fix any $(\eps,d_1)$-lower-regular graph $G_1$ between $X_1$ and $X_2$. Write $m_2:= d_2n_1n_3$.
We first prove the following claim.
\begin{claim}
For any $X_1'\subset X_1$ of size at least $\eps_1 X_1$, in all but $(\beta/2)^{m_2}\binom{n_1 n_3}{m_2}$ $(\eps,d_2)$-lower-regular graphs $G_2$ with $m_2$ edges between $X_1$ and $X_3$, at least $(1-3\eps_1)|Y|$ vertices in $Y=X_2\times X_3$ have degree at least 
$(1-3\eps_1) d_1d_2|X_1'|$ to $X_1'$ in $A(G_1 \cup G_2)$.  
\end{claim}

\begin{proof}
    By \Cref{prop: reg_large_her}, $G_1[X_1' \cup X_2]$ is an $(\eps_2,d_1')$-lower-regular graph with $d_1' \simeps{\eps} d_1$. 
    By \Cref{thm:low reg_her}, for each $q'\ge q := C_0/ d_1'$, the number of sets of size $q'$ in $X_1'$ that are $(\eps_1,d_1')$-lower-regular with $X_2$ is at least $\left(1-\beta_0^{q'}\right) \binom{|X_1'|}{q'}$. Call those sets \emph{good}.

    Consider any $(\eps,d_2)$-lower-regular graph $G_2$ between $X_1$ and $X_3$. By \Cref{prop: reg_large_her}, and since $\eps_2 \le \eps_1$, $G_2'=G_2[X_1' \cup X_3]$ is an $(\eps_1,d_2')$-lower-regular graph with $d_2' \simeps{\eps} d_2$.
    By \Cref{prop:reg_degree}, at least $(1-\eps_1)n_3$ vertices in $X_3$ have degree at least
    \begin{equation}\label{1}
        (1-\eps_1)d_2'|X_1'| \ge(1-\eps_1)^2\eps_1 d_2 n_1 
        \ge (1-\eps_1)^2\eps_1 C / d_1 \ge (1-\eps_1)^3\eps_1 C / d_1' \ge C_0/d_1' = q
    \end{equation}
    to $X_1'$.
    Let $\mathcal{D}$ be the set of $n_3$-tuples of non-negative integers with at most $\eps_1n_3$ value less than $q$. We just showed that $(\deg_{G_2'}(v_1), \dots, \deg_{G_2'}(v_{n_3})) \in \mathcal{D}$.

    We say a bipartite graph between $X_1'$ and $X_3$ is bad if 
    \[
    |\{v \in X_3 \, : \, \Gamma_1(v) \text { is good }, |\Gamma_1(v)| \ge q \}|< (1-2\eps_1)n_3.
    \]
    Fix an integer $m'$ and consider a tuple $(r_1,r_2,\dots,r_{n_3}) \in \mathcal{D}$ such that $r_1+\dots+r_{n_3}=m'$.
    Among all bipartite graphs between $X_1'$ and $X_3$ with the vertices $1,2,\dots,n_3 $ in $X_3$ having degree $r_1,r_2,\dots,r_{n_3}$ respectively, the fraction of bad graphs is at most 
    \begin{align*}
    2^{n_3}\left(\beta_0^{q}\right)^{\eps_1 n_3} 
    \le 2^{n_3}\left(\beta_0^{(1-\eps_1)\eps_1 d_2 n_1}\right)^{\eps_1 n_3} 
    \le 2^{n_3}\beta_0^{(1-\eps_1)\eps_1^2 (d_2 n_1 n_3)} 
    \le \left(2\beta_0 ^{(1-\eps_1)\eps_1^2}\right)^{m_2} 
    \le \left(\frac{\beta}{2}\right)^{m_2}.   
    \end{align*}
    The calculation above is as follows. Since at most $\eps n_3$ vertices have degree at most $q$, there are at least $\eps_1 n_3$ vertices with degree at least $q\ge(1-\eps_1)\eps_1 d_2 n_1$ such that their neighbourhood in $X_1'$ is not a good set. There are at most $2^{n_3}$ ways to choose the set of these vertices.
    Secondly, for any vertex $v_i$, its neighbourhood is equally likely to be any set in $X_1$ with size $\Gamma_1(v_i)$, independently of all other vertices. Since the probability that $\Gamma_1(v_i)$ is bad is at most $\beta_0^{\deg(v_i)}\le \beta_0^{q}$, the probability that $\Gamma_1(v_i)$ are bad for all $\eps n_3$ vertices is at most $\left(\beta_0^{q}\right)^{\eps_1 n_3}$. Also noted that $n_1 \ge C (d_1d_2)^{-1}$ imply $m_2= d_2n_1n_3 \ge (C/d_1) n_3 \ge n_3$. 
    
    Each bipartite graph between $X_1'$ and $X_3$ with exactly $m'$ vertices can be extended to a bipartite graph between $X_1$ and $X_3$ with exactly $m$ edges in exactly $\binom{(n_1-|X_1'|)n_3}{m_2-m'}$ ways.
    Hence, The number of $(\eps,d_2)$-lower-regular graph $G_2$ between $X_1$ and $X_3$ such that the subgraph $G_2'$ is bad is at most 
    \begin{align*}
    &\sum_{0 \le m' \le m} \enspace \sum _{r_1+\dots+r_{n_3}=m'} 
    |\{G_2 \; : \; G_2' \text{ is bad }, \forall i, \deg_{G_2'}(v_i)=r_i\}|{(n_1-|X_1'|)n_3 \choose m_2-m'} \\
    &= \sum_{0 \le m' \le m} \enspace \sum _{\substack{r_1+\dots+r_{n_3}=m' \\ (r_1,\dots,r_{n_3}) \in \mathcal{D}}}
    |\{G_2 \; : \; G_2'   \text{ is bad }, \forall i, \deg_{G_2'}(v_i)=r_i\}| {(n_1-|X_1'|)n_3 \choose m_2-m'}\\
    &\le\sum_{0 \le m' \le m} \enspace \sum _{\substack{r_1+\dots+r_{n_3}=m' \\ (r_1,\dots,r_{n_3}) \in \mathcal{D}}} \left(\frac{\beta}{2}\right)^{m_2}|\{G_2 \; : \;\forall i, \deg_{G_2'}(v_i)=r_i\}| {(n_1-|X_1'|)n_3 \choose m_2-m'}\\
    &\le \left(\frac{\beta}{2}\right)^{m_2}  \sum_{0 \le m' \le m} {(n_1-|X_1'|)n_3 \choose m_2-m'} \sum _{r_1+\dots+r_{n_3}=m'} |\{G_2 \; : \; \forall i, \deg_{G_2'}(v_i)=r_i\}| \\
    &= \left(\frac{\beta}{2}\right)^{m_2}  \sum_{0 \le m' \le m} {(n_1-|X_1'|)n_3 \choose m_2-m'} |\{G_2 \; : \; G_2' \text{ has exactly $m'$ edges}\}|
    \le \left(\frac{\beta}{2}\right)^{m_2}\binom{n_1 n_3}{m_2}
    \end{align*}
    The last inequality follows since we can count the number of bipartite graphs between $X_1$ and $X_3$ with exactly $m$ edges by choosing exactly $m'$ edges between $X_1'$ and $X_3$ first, then choosing the remaining $m_2-m'$ edges between $X_1 \setminus X_1'$ and $X_3$.
    
    Apart from those $(\beta/2)^{m_2}\binom{n_1 n_3}{m_2}$ graphs, the induced graph in $[X_1'\cup X_3]$ have at least $(1-2\eps)n_3$ vertices $v \in X_3$ such that $\Gamma_1(v)$ is good, which mean that $\Gamma_1(v)$ is $(\eps_1,d_1')$-lower-regular with $X_2$.
    By \Cref{prop:reg_degree}, for each such vertex $v \in X_3$, at least $(1-\eps_1)n_2$ vertices $x_2\in X_2$ have degree at least 
    \[
    (1-\eps_1)d_1'|\Gamma_1(v)|\ge (1-\eps_1)^3 d_1d_2|X_1'| \ge (1-3\eps_1) d_1d_2|X_1'|
    \]
    into $\Gamma_1(v)$.
    In other words, there are at least $(1-\eps_1)n_2$ vertices in $Y$ with one endpoint as $v$ having a degree at least $(1-3\eps_1) d_1d_2|X_1'|$ into $X_1'$. 
    In total, there are at least $\left((1-2\eps_1)n_3\right)\left((1-\eps_1)n_2\right) \ge (1-3\eps_1)n_2n_3 = (1-3\eps_1)|Y|$ such vertices in $Y$, as required.
\end{proof}

Back to the proof of \Cref{lem:Auxiliary_regular}. Using the claim for all set $|X_1'|$ of size at least $\eps_1|X_1'|$, we have that, apart from at most $2^{n_1}(\beta/2)^{m_2}\binom{n_1 n_3}{m_2}<\beta^{m_2}\binom{n_1 n_3}{d_2}$ $(\eps,d_2)$-lower-regular graphs between $X_1$ and $X_3$ with $m_2$ edges, any $X_1'\subset X_1$ and any $Y'\subset Y$ with $|X_1'|\ge \eps' |X_1|, |Y'|\ge \eps'|Y|$ have at least 
\begin{align*}
(|Y'|-3\eps_1|Y|)((1-3\eps_1)d_1d_2|X_1'|)
&\ge (1-3\eps_1/\eps')|Y'|(1-3\eps_1)d_1d_2|X_1'| \\
&\ge (1-\eps')d_1d_2|Y'||X_1'|      
\end{align*}
edges between them. That is, $X_2$ and $Y$ form a $(\eps',d_1d_2)$-lower-regular graph.
\end{proof}

Fix $n,m$ and let $d=m/n^2$. Let $X_2,X_3,X_4$ be vertex sets of size $n,dn,dn$ respectively. 
For $\delta>0$, we define $\B_\delta(3,n,dn,dn,d)$ to be the family of graphs $G \in \G(3,n,dn,dn,d)$ such that at least $\delta n$ vertices in $X_2$ are contained in at most $(1-\delta)n^2d^5$ canonical triangles in $G$. We will show that $\B_\delta(3,n,dn,dn,d)$ is small in $\G(3,n,dn,dn,d)$ with respect to $d_0(n,dn,dn)=n^{-1/m_2(K_4)}=n^{-2/5}$. To do this, we first extend an empty graph with the edges between $X_2$ and $X_3$, then the edges between $X_3$ and $X_4$, and then the edges between $X_2$ and $X_4$. Hence, we let $H_0$ be the empty graph on vertices $1,2,3$ and define $H_{1,1}=H_0+(1,2)$ (that is, adding the edge $(1,2)$ to $H_0$), $H_{2,1}=H_{1,1}+(1,3)$, and $H_{2,2}=H_{2,1}+(2,3)=K_3$.
For all $\eps>0$ and all $3\times 3$ symmetric matrix $D$, define $\B_{\eps}(H_{2,1},n_1,n_2,n_3,D)$ as the family of graphs $G$ in $\G(H_{2,1},n_1,n_2,n_3,D)$ such that the Aux-path graph $A(G)$ between $X_1$ and $Y=X_2 \times X_3$ is not $(\eps,D_{1,2}D_{1,3})$-lower-regular.

\begin{corollary}\label{lem: Aux graph reg}
For $0 < \beta,\eps' < 1$, there exist constants $\eps,C>0$ such that for all $n_1,n_2,n_3, D$, and $m_{1,3}:=D_{1,3}n_1n_3$ satisfying 
\[
n_1\ge \frac{C}{D_{1,2}D_{1,3}} \quad, \enspace 
n_2 \ge \frac{C}{D_{1,2}} \quad, \enspace 
n_3 \ge \frac{C}{D_{1,3}}.
\]
every $G \in \G(H_{1,1},n_1,n_2,n_3,D,\eps)$ can be extended to $\B_{\eps'}(H_{2,1},n_1,n_2,n_3,D)$ in at most
\[
\beta ^{m_{1,3}}\binom{n_1 n_3}{m_{1,3}} 
\]
ways.
\end{corollary}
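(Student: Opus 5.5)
The corollary is essentially a restatement of \Cref{lem:Auxiliary_regular} in the language of extensions, so the plan is to translate between the two and choose constants appropriately. Given $\beta,\eps'$, we apply \Cref{lem:Auxiliary_regular} with the same $\beta$ and $\eps'$ and obtain constants $\eps_0, C$. We then set $\eps$ so that $(\eps)$-regularity of a pair with density $D_{a,b}$ implies $(\eps_0, D_{a,b})$-lower-regularity, and keep the same $C$. Taking $\eps := \eps_0/2$, say, suffices: if a pair is $(\eps)$-regular with density $D$, then every pair of subsets of proportion at least $\eps_0 \ge \eps$ has density at least $(1-\eps)D \ge (1-\eps_0)D$.

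Next we fix $G \in \G(H_{1,1},n_1,n_2,n_3,D,\eps)$. Its only edges form an $(\eps)$-regular graph $G_1$ between $X_1$ and $X_2$ with density $D_{1,2}$, which by the previous remark is $(\eps_0,D_{1,2})$-lower-regular. An extension of $G$ to $\G(H_{2,1},n_1,n_2,n_3,D,\eps)$ amounts to choosing an $(\eps)$-regular graph $G_2$ between $X_1$ and $X_3$ with exactly $m_{1,3}=D_{1,3}n_1n_3$ edges. Each such $G_2$ is in particular $(\eps_0,D_{1,3})$-lower-regular. The extension lies in $\B_{\eps'}(H_{2,1},\dots)$ exactly when the path-Aux graph $A(G_1\cup G_2)$ between $X_1$ and $X_2\times X_3$ fails to be $(\eps',D_{1,2}D_{1,3})$-lower-regular. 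The hypotheses $n_1\ge C/(D_{1,2}D_{1,3})$, $n_2\ge C/D_{1,2}$ and $n_3\ge C/D_{1,3}$ are precisely those of \Cref{lem:Auxiliary_regular} with $d_1=D_{1,2}$ and $d_2=D_{1,3}$. That lemma then bounds the number of bad lower-regular $G_2$ by $\beta^{m_{1,3}}\binom{n_1n_3}{m_{1,3}}$. Since the bad extensions form a subset of these, the same bound holds for them.

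The only point needing care is bookkeeping rather than mathematics. Note that the bound stated in \Cref{lem:Auxiliary_regular} contains a typo ($\binom{n_1n_2}{\cdot}$ instead of $\binom{n_1n_3}{\cdot}$); its proof actually gives $\binom{n_1n_3}{m_2}$, which is what we use. We must also make sure that the density parameter in the lower-regularity hypothesis for $G_2$ matches $d_2=D_{1,3}$; in the lemma this is written as $(\eps,d_1)$ but used as $(\eps,d_2)$. Finally, if the extensions are counted over all graphs with $m_{1,3}$ edges rather than only the $(\eps)$-regular ones, we restrict to $(\eps)$-regular extensions, since only those lie in $\G(H_{2,1},\dots,\eps)$, which is the family relevant to \Cref{def:gen_small}.
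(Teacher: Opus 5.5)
Your proposal is correct and matches the paper's proof. The paper also invokes \Cref{lem:Auxiliary_regular} with $d_1=D_{1,2}$ and $d_2=D_{1,3}$, using that an $(\eps)$-regular pair of density $D$ is lower-regular; your choice $\eps=\eps_0/2$ is harmless. Your points about the typos in the lemma and about counting only $(\eps)$-regular extensions, as \Cref{def:gen_small} requires, spell out what the paper's one-line proof leaves implicit.
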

\begin{proof}
Follows immediately from \Cref{lem:Auxiliary_regular} since a regular graph is also lower regular.
\end{proof}

For all $\delta>0$ and all $3\times 3$ symmetric matrix $D$, define $\B_\delta(3,n_1,n_2,n_3,D)$ to be the family of graphs $G$ in $\G(3,n_1,n_2,n_3,D)$ with vertex sets $X_1,X_2,X_3$ such that at least $\delta n_1$ vertices in $X_1$ are contained in at most $(1-\delta)n_2 n_3 D_{1,2}D_{1,3}D_{2,3}$ canonical triangles in $G$.

\begin{lemma}\label{lem: right triangles}
For $0 < \beta,\delta< 1$, there exist constants $\eps_0,C>0$ such that for all $n_1,n_2,n_3,D$, $m_{2,3}:=D_{2,3}n_2 n_3$, and $0<\eps \le \eps_0 $ satisfying 
\[
 m_{2,3}\ge \frac{C}{D_{1,2}D_{1,3}} \enspace, \quad
\]
every $G \in \G(H_{2,1},n_1,n_2,n_3,D,\eps)\setminus \B_{\eps}(H_{2,1},n_1,n_2,n_3,D)$ can be extended to $\B_{\delta}(3,n_1,n_2,n_3,D)$ in at most 
\[
\beta ^{m_{2,3}}\binom{n_2 n_3}{m_{2,3}}    
\]
ways.
\end{lemma}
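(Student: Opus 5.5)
We apply \Cref{thm:low reg_her} to the path-Aux graph $A(G)$, taking $Y=X_2\times X_3$ as the side from which we sample. Fix $G\in\G(H_{2,1},n_1,n_2,n_3,D,\eps)\setminus\B_{\eps}(H_{2,1},n_1,n_2,n_3,D)$. By definition, $A(G)$ is an $(\eps,d)$-lower-regular bipartite graph between $Y$ and $X_1$ with $d:=D_{1,2}D_{1,3}$.

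An extension of $G$ to $K_3$ is a choice of a set $Q\subseteq Y$ of exactly $m_{2,3}$ pairs, namely the edges between $X_2$ and $X_3$. For such a $Q$ and $x_1\in X_1$, the number of canonical triangles containing $x_1$ equals $|\Gamma_{A(G)}(x_1)\cap Q|$. Choosing $Q$ uniformly at random among $\binom{n_2n_3}{m_{2,3}}$ options is exactly choosing a uniform $q$-subset of $Y$ with $q=m_{2,3}$. The target count $(1-\delta)n_2n_3D_{1,2}D_{1,3}D_{2,3}$ equals $(1-\delta)dq$.

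We choose constants as follows. Set $\eps'=\delta/2$ (any $\eps'$ with $(1-\eps')^2\ge 1-\delta$ and $\eps'<\delta$ works). Apply \Cref{thm:low reg_her}, with the roles of $V_1,V_2$ played by $Y,X_1$, using parameters $\eps'$ and $\beta$. This yields $\eps_0$ and $C$, and the hypothesis $m_{2,3}\ge C/(D_{1,2}D_{1,3})=C/d$ is exactly the size condition $q\ge Cd^{-1}$. For every $0<\eps\le\eps_0$, note that $A(G)$ is $(\eps,d)$-lower-regular, hence also $(\eps_0,d)$-lower-regular (the condition only weakens as $\eps$ grows). So all but at most $\beta^{q}\binom{n_2n_3}{q}$ sets $Q$ form an $(\eps',d)$-lower-regular pair with $X_1$.

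It remains to check that such a $Q$ gives an extension outside $\B_\delta(3,\dots)$. Apply \Cref{prop:reg_degree} (the lower-regular part) to the pair $(X_1,Q)$ with $V_4'=Q$. At most $\eps' n_1<\delta n_1$ vertices $x_1\in X_1$ have $|\Gamma(x_1)\cap Q|<(1-\eps')d|Q|$. Every other vertex lies in at least $(1-\eps')dq\ge(1-\delta)dq$ canonical triangles, so fewer than $\delta n_1$ vertices are bad and $G'\notin\B_\delta$.

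The only delicate point is the bookkeeping. We must make sure \Cref{thm:low reg_her} is applied with $Y$ (not $X_1$) as the sampled side, and that the resulting $\eps$ is uniform in $D$ and the $n_i$. This holds because \Cref{thm:low reg_her} has no dependence on the set sizes beyond $q\ge Cd^{-1}$.

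We also handle a possible strict-versus-weak inequality at the boundary "at most $(1-\delta)\cdots$". Choosing $\eps'<\delta$ strictly gives the slack needed: good vertices have at least $(1-\eps')dq>(1-\delta)dq$ triangles whenever $dq>0$, so they are not counted in the definition of $\B_\delta$.
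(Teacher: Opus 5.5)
Your proof is correct and is essentially the paper's argument: apply \Cref{thm:low reg_her} to the lower-regular path-Aux graph $A(G)$, taking the $m_{2,3}$-subsets of $Y=X_2\times X_3$ as the sampled side, then use \Cref{prop:reg_degree} to show that all but few vertices of $X_1$ lie in enough canonical triangles. Your choice $\eps'=\delta/2<\delta$ (the paper takes $\eps'=\delta$) and your remark that $(\eps,d)$-lower-regularity implies $(\eps_0,d)$-lower-regularity for $\eps\le\eps_0$ only make explicit two small points the paper glosses over: the weak inequality ``at most'' in the definition of $\B_\delta$, and applying the theorem for every $\eps\le\eps_0$.
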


\begin{proof}
Apply \Cref{thm:low reg_her} with $\beta \leftarrow \beta$, $\eps' \leftarrow \delta$, $V_1 \leftarrow X_1$, $V_2 \leftarrow Y:= X_2 \times X_3$ to obtain constants $\eps \leftarrow \eps$ and $C \leftarrow C $. 
Consider any $\eps\le\eps_0$, and any $G \in \G(H_{2,1},n_1,n_2,n_3,D,\eps)\setminus \B_{\eps}(H_{2,1},n_1,n_2,n_3,D)$.
Each bipartite graph between $X_2$ and $X_3$ can be represented by a subset of $Y$ containing all of its edges. 
Since $m_{2,3}\ge C(D_{1,2}D_{1,3})^{-1}$, at most $\beta^{m_{2,3}}\binom{n_2 n_3}{m_{2,3}}$ bipartite graphs with exactly $m_{2,3}$ edges between $X_2$ and $X_3$ is not $(\delta,D_{1,2}D_{1,3})$-lower-regular with $X_1$.
It is enough to show that for all $(\eps)$-regular graphs $G_{2,3}$ that do not belong to the exception above, we have $G \cup G_{2,3} \notin \B_{\delta}(3,n_1,n_2,n_3,D)$. 

Since $X_1$ is $(\delta,D_{1,2}D_{1,3})$-lower-regular with $G_{2,3}$, at least $(1-\delta)n_1$ vertices $v_1 \in X_1$ have degree at least $(1-\delta)D_{1,2}D_{1,3}|G_{2,3}|$ in $G_{2,3} \seq Y$ by \Cref{prop:reg_degree}.
Hence, such a vertex $v_1$ belongs to at least 
\[
(1-\delta)D_{1,2}D_{1,3}|G_{2,3}|=(1-\delta)D_{1,2}D_{1,3}m_{2,3}
=(1-\delta)n_2 n_3 D_{1,2}D_{1,3}D_{2,3}
\]
canonical triangles in $G \cup G_{2,3}$. Therefore, at most $\delta n_1$ vertices which fail to do so, and hence $G \cup G_{2,3} \notin \B_{\delta}(3,n_1,n_2,n_3,D)$.
\end{proof}

Note that the edges between $X_2$ and $X_3$ don't need to be lower-regular, only a $\beta ^{m_{2,3}}$ fraction of all $\binom{n_2 n_3}{m_{2,3}}$ possible bipartite graphs with $m_{2,3}$ edges don't have the required property.

\begin{lemma}\label{lem:small family}
For all $\delta>0$, $\B_{\delta}(3,n,dn,dn,d)$ is small in $\G(3,n,dn,dn,d)$ with respected to $d_0(n,dn,dn)=n^{-2/5}$.    
\end{lemma}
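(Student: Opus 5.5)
We verify \Cref{def:gen_small} directly, using the valid sequence $H_{1,1}=H_0+(1,2)$, $H_{2,1}=H_{1,1}+(1,3)$, $H_{2,2}=K_3$ fixed above. The intermediate bad families are as follows. For $H_{1,1}$ we take $\B(H_{1,1},\dots)=\emptyset$, so condition (\ref{def:gen_small i}) is trivial. For $H_{2,1}$ we take $\B(H_{2,1},\dots)=\B_{\eps'}(H_{2,1},n_1,n_2,n_3,D)$, where $\eps'$ is chosen below. For $H_{2,2}=K_3$ we take $\B_\delta(3,n_1,n_2,n_3,D)$. Here $n_1=n$ and $n_2=n_3=dn$, with $d=m/n^2$.

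Given $\beta>0$, we proceed in the following order.
\begin{enumerate}[(a)]
\item Apply \Cref{lem: right triangles} with $\beta,\delta$. This yields $\eps_0$ and $C_1$, and we set $\eps':=\eps_0$.
\item Apply \Cref{lem: Aux graph reg} with $\beta$ and this $\eps'$. This yields $\eps_1$ and $C_2$.
\item Set $\eps_\beta:=\min\{\eps_0,\eps_1\}$. The constant $C_\beta$ and $n_\beta$ are chosen at the end.
\end{enumerate}
Condition (\ref{def:gen_small ii}) for $(k,i)=(2,1)$ is exactly \Cref{lem: Aux graph reg}. Condition (\ref{def:gen_small ii}) for $(k,i)=(2,2)$ is exactly \Cref{lem: right triangles}, because $G\notin\B_{\eps'}(H_{2,1},\dots)$ and $\eps\le\eps'$. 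In the latter case one uses that lower-regularity with parameter $\eps'$ is what the lemma requires; monotonicity in the parameter handles any mismatch. The bad-extension counts are $\beta^{m_{1,3}}\binom{n_1n_3}{m_{1,3}}$ and $\beta^{m_{2,3}}\binom{n_2n_3}{m_{2,3}}$ respectively, which is what the definition demands.

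The remaining work is to check the size hypotheses of the two lemmas whenever $D\ge C_\beta D_0$, that is, whenever every entry satisfies $D_{a,b}\ge C_\beta n^{-2/5}$. We also need $D_{a,b}\le 1$. The hypotheses are:
\begin{enumerate}[(i)]
\item $n_1=n\ge C_2/(D_{1,2}D_{1,3})$;
\item $n_2=dn\ge C_2/D_{1,2}$ and $n_3=dn\ge C_2/D_{1,3}$;
\item $m_{2,3}=D_{2,3}(dn)^2\ge C_1/(D_{1,2}D_{1,3})$.
\end{enumerate}
Condition (i) holds because $D_{1,2}D_{1,3}\ge C_\beta^2n^{-4/5}$, so the right-hand side is at most $C_2n^{4/5}/C_\beta^2\le n$ for $C_\beta\ge\sqrt{C_2}$. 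For (ii), the relevant regime is $d\ge C_\beta n^{-2/5}$ (the setting $m\ge Cn^{2-2/5}$ of the conjecture). Then $dn\ge C_\beta n^{3/5}$, while $C_2/D_{1,2}\le C_2n^{2/5}/C_\beta$, so (ii) holds for large $n$. For (iii), we have $m_{2,3}\ge C_\beta^3 n^{-6/5}n^{6/5}=C_\beta^3$. The right-hand side is at most $C_1n^{4/5}/C_\beta^2$.

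Condition (iii) is the main obstacle. The bound $m_{2,3}\ge C_\beta^3$ is not enough unless we use the actual size of $d$. The fix is to use $d\ge C_\beta n^{-2/5}$ inside $n_2n_3$: then $m_{2,3}=D_{2,3}d^2n^2\ge C_\beta^3 n^{-2/5}n^{-4/5}n^2=C_\beta^3n^{4/5}$. This exceeds $C_1n^{4/5}/C_\beta^2$ once $C_\beta^5\ge C_1$. This calculation is exactly where the exponent $2/5=1/m_2(K_4)$ is essential: $n^{-2/5}\cdot n^{-4/5}\cdot n^{2}$ balances $n^{4/5}$. So I would take $C_\beta=\max\{\sqrt{C_2},C_1^{1/5},\dots\}$ and interpret the function $d_0(n,dn,dn)=n^{-2/5}$ together with $d\ge C_\beta d_0$. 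If the definition only allows $D\ge C_\beta D_0$ with $d$ independent, I would note that in every application $D_{a,b}$ and $d$ are comparable, and impose $d\ge C_\beta n^{-2/5}$ as part of the regime.

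Finally, note that when $D$ is the constant matrix $d$, the definition of $\B_\delta(3,n_1,n_2,n_3,D)$ reduces to the family $\B_\delta(3,n,dn,dn,d)$, since $n_2n_3D_{1,2}D_{1,3}D_{2,3}=n^2d^5$. This completes the verification that $\B_\delta(3,n,dn,dn,d)$ is small.
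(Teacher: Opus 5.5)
Your proposal is correct and is essentially the paper's proof. It uses the same valid sequence $H_{1,1}\subset H_{2,1}\subset K_3$, the same intermediate families ($\emptyset$, then $\B_{\eps_0}(H_{2,1},\dots)$ with $\eps_0$ taken from \Cref{lem: right triangles} and fed into \Cref{lem: Aux graph reg}), and the same size checks from $d\ge C_\beta n^{-2/5}$. Your ``main obstacle'' in (iii) is only an arithmetic slip: the factor $n^{-6/5}$ should be $n^{-2/5}$, which gives $m_{2,3}\ge C_\beta^3n^{4/5}$ at once; your choices $\eps_\beta=\min\{\eps_0,\eps_1\}$ and $C_\beta\ge\max\{\sqrt{C_2},C_1^{1/5}\}$ are if anything slightly more careful than the paper's.
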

\begin{proof}
Fix $\beta>0$.
Let $H_0$ be the empty graph on vertices $1,2,3$. Define $H_{1,1}=H_0+(1,2)$, $H_{2,1}=H_{1,1}+(1,3)$, and $H_{2,2}=H_{2,1}+(2,3)=K_3$ (same as before). Note that $\mathbb H: H_0 \subset H_{1,1} \subset H_{2,1} \subset H_{2,2}$ is a valid sequence for $H$, and that $\B_{\delta}(3,n,dn,dn,d)$ is the same as $\B(H_{2,2},n,dn,dn,d)$.

Applying \Cref{lem: right triangles} with $\beta \leftarrow \beta, \delta \leftarrow \delta$ yields constants $\eps_0 \leftarrow \eps_0, C_1 \leftarrow C$.
Applying \Cref{lem: Aux graph reg} with $\beta \leftarrow \beta, \eps' \leftarrow \eps_0$ yields constants $\eps_\beta \leftarrow \eps_0, C_2 \leftarrow C$.
Let $C_\beta=\max \{C_1,C_2,1\}$. For any $d \ge C_\beta d_0=  C_\beta n^{-2/5}$ , we have 
\[
m_{2,3}=D_{2,3}n_2n_3 = d(dn)(dn) = (d^5n^2)/d^2 \ge C_\beta^5/d^2 \ge C_1(D_{1,2}D_{1,3})^{-1},
\]
so the hypothesis of \Cref{lem: right triangles} is satisfied. Furthermore,
\[
n_1=n = d^2n/d^2 \ge C_\beta^2n^{1/5}/d^2 \ge C_2/(D_{1,2}D_{1,3}) \quad , 
\enspace
n_2=dn= d^2n/d^2 \ge C_\beta^2n^{1/5}/d^2 \ge C_2/D_{1,2},
\]
and similarly for $n_3$. So the hypothesis of \Cref{lem: Aux graph reg} is also satisfied. Define 
\[
\B(H_{1,1},n,dn,dn,d)= \emptyset \quad , \enspace
\B(H_{2,1},n,dn,dn,d)=\B_{\eps_1}(H_{2,1},n,dn,dn,d).
\]
Condition \ref{def:gen_small i} of \Cref{def:gen_small} holds as $|\B(H_{1,1},n,dn,dn,d)|=0$. By \Cref{lem: Aux graph reg}, condition \ref{def:gen_small ii} holds for $k=2, i=1$. By \Cref{lem: right triangles}, condition \ref{def:gen_small ii} holds for $k=2, i=2$.
\end{proof}

We can finally prove our main theorem.
\begin{theorem}
    $H=K_4$ case of \Cref{conj: count} is true. 
\end{theorem}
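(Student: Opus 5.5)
We will follow the strategy outlined in the introduction. Let $V_1,\dots,V_4$ be the classes, $d=m/n^2\ge C n^{-2/5}$, and set $\ell=4$. We reveal the edges in two stages. First we reveal all edge sets except $E_{14}$, giving a graph in $\G(K_4-e,n,m,\eps)$, where the missing edge is renamed $(1,2)$ as in \Cref{lem:gen_small_bad}. Then we reveal $E_{14}$.

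\textbf{Stage 1.} We apply \Cref{lem:gen_small_bad} with $\ell=4$ and $x=(1-\eps')m/n$. The family $\B$ is the one from \Cref{lem:small family}, namely $\B_{\delta'}(3,n,x,x,d')$, with $d_0=n^{-2/5}$. The vertex set of size $n$ plays the role of $V_4$ (the other endpoint of the missing edge), and the two sets of size $x$ are the neighbourhoods of $v_1$ in $V_2,V_3$. We choose $d'\simeps{\eps'} d$, which is legitimate since by \Cref{prop:reg_degree} typical neighbourhoods are of size $\simeps{\eps} dn$ and induced densities are $\simeps{}d$ by regularity. The hypotheses $m\ge 2n^{3/2}\sqrt{\log n}$ and $m/n^2\ge Cd_0(x)\ge 1/x$ hold because $m\ge Cn^{8/5}$. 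There are two technical points here.
\begin{enumerate}[(a)]
\item The lemma is stated with exact sizes $x$ and density $d'$, while true neighbourhoods vary. We handle this by passing to subsets of size exactly $x$ using \Cref{prop: reg_large_her} and \Cref{lem:sub_of_eps_is_eps}, or by noting that the lemma's proof gives the statement for subsets.
\item Smallness in \Cref{lem:small family} is with respect to $d_0(n,dn,dn)$ on sets of sizes $n,dn,dn$, which matches this configuration.
\end{enumerate}
The conclusion is that, apart from $\beta^m\binom{n^2}{m}^5$ graphs, at least $(1-\eps')n$ vertices $v_1\in V_1$ have the following property: all but $\delta' n$ vertices $v_4\in V_4$ lie in at least $(1-\delta')x^2d'^3\approx(1-2\delta')n^2d^5$ triangles inside $\Gamma_2(v_1)\cup\Gamma_3(v_1)$. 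Each such triangle completes a $K_4$ through $v_1v_4$ once that edge is present.

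\textbf{Stage 2.} Fix a good $G$ from Stage 1. Define $W\subseteq V_1\times V_4$ as the set of pairs $(v_1,v_4)$ with $\deg_{K_4}(v_1v_4)\ge(1-2\delta')n^2d^5$ in the sense of \Cref{def: degree}, counted as if the edge were present. Then $|W|\ge(1-\eps'-\delta')n^2$. We choose $E_{14}$ with $m$ edges, subject to being $(\eps)$-regular. The number of $m$-sets with more than $\gamma m$ edges outside $W$ is at most
\[
\sum_{j>\gamma m}\binom{(\eps'+\delta')n^2}{j}\binom{n^2}{m-j}\le \beta^m\binom{n^2}{m}
\]
by \eqref{eq:binomial coefficient 1}, once $\eps'+\delta'\ll\gamma$ and $\beta$ is fixed. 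The same bound can alternatively be obtained by a Chernoff or hypergeometric estimate. For every other choice, the number of $K_4$'s is at least $(1-\gamma)m(1-2\delta')n^2d^5\ge(1-\delta)n^4d^6$, where the total $n^2d\cdot n^2d^5=n^4d^6$ is the expected count. Summing the Stage 1 exceptions, each multiplied by $\binom{n^2}{m}$ choices of $E_{14}$, together with the Stage 2 exceptions, gives at most $2\beta^m\binom{n^2}{m}^6$ bad graphs. Rescaling $\beta$ finishes the proof.

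\textbf{Main obstacle.} The hardest step is Stage 1: verifying that \Cref{lem:gen_small_bad} applies with the family of \Cref{lem:small family}. This requires the exact-size issue in point (a) to be handled, and it requires the parameters to be chained in the right order. That order is: $\delta\to\gamma\to\delta',\eps'\to$ the smallness constants $\to\eps,C$.
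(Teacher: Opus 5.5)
Your proposal is correct and follows the paper's proof essentially step for step. You apply \Cref{lem:gen_small_bad} with $\ell=4$ to the family $\B_{\delta'}$ of \Cref{lem:small family} on the five revealed pairs, and then bound the choices of the last edge set that contain too many non-good pairs via \eqref{eq:binomial coefficient 1} and \eqref{eq:binomial coefficient 2}. Two small clarifications. First, you should fix exactly $d'=(1-\eps')m/n^2$, as the paper does with $\eps'=\delta'$: the lemma requires $d'\le(1-\eps')m/n^2$, and \Cref{lem:small family} needs $x=d'n$, so a looser $d'\simeps{\eps'}d$ does not fit either hypothesis. Second, your concern (a) does not arise, because \Cref{lem:gen_small_bad} is already phrased as containing a member of the family as a subgraph.
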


\begin{proof} 
Fix any $\beta,\delta>0$ and let $\delta'$ be small enough so that 
\[
(3\delta')^{\delta} \le \frac{\beta}{8} \quad , \enspace 
\delta' \le \min\{ \frac{\delta}{12}, \eps, 1-\eps \}.
\]
Let the 4 partition classes be $V_1, V_2, V_3, V_4$. 
Apply \Cref{lem:gen_small_bad} with $\ell \leftarrow 4, \beta \leftarrow \beta/2, \eps' \leftarrow \delta'$ to obtain constant $C\leftarrow C$ and $\eps \leftarrow \eps$. We may assume $C\ge 2$. By \Cref{lem:small family}, $\B_{\delta'}(3,n,dn,dn,d)$ is small in $\G(3,n,dn,dn,d)$ with respected to $d_0=n^{-2/5}$.

Recall that in this case, $m_2(H)=5/2$. Let $H'$ be a graph obtained by deleting the edge $(1,2)$ from $H$, $x=(1-\delta')m/n$ and $d'=(1-\delta')m/n^2$. As $m/n^2 \ge C n^{-1/m_2(H)} = C n^{-2/5}$, we have
\[
\begin{split}
x=(1-\delta')\frac{m}{n} \quad, \enspace 
m\ge C n^{2-2/5}=C n^{8/5} \ge 2 n^{3/2}\sqrt{\log n} \quad, \enspace \\
\frac{m}{n^2} \ge C n^{-2/5}=C d_0 \ge 1/x \quad, \enspace
(1 - \eps)\frac{m}{n^2} \ge d'=(1-\delta')\frac{m}{n^2} \ge  \eps\frac{m}{n^2}.
\end{split}
\]
As all the conditions of \Cref{lem:gen_small_bad} are met, we can apply it with the family $\B_{\delta'}(3,n,nd',nd',d')$.
Hence, all but $(\beta/2)^{m} {n^2 \choose m}^5$ graph $G \in \G(H',n,m,\eps)$ is a \emph{good graph}, which we define as at least $(1-\delta')n$ vertices $v_1 \in V_1$ have a property that $G(v_1):=V_2 \cup \left(\Gamma(v_1) \cap (V_3 \cup V_4) \right)$ contained a member of family $\G(3,n,dn,dn,d) \setminus \B_{\delta'}(3,n,dn,dn,d)$. Called all such vertices $v_1$ good vertices.

We call the an edge between $V_1$ and $V_2$ \emph{good} edge if $\deg_{H'}(e,G) \ge (1-6\delta')n^2d^5$.
Fix a good vertex $v_1\in V_1$. By definition of $\B_{\delta'}(3,n,nd',nd',d')$, at least $(1-\delta')n$ vertices $v_2 \in V_2$ is such that $v_2$ is contained in at least
\[
(1-\delta')n^2(d')^5 = (1-\delta')^6n^2d^5 \ge (1-6\delta')n^2d^5
\] 
canonicals triangles in $G(v_1)$.
Hence, at least $(1-\delta')n$ edges $e$ between $V_1$ and $V_2$ with $v_1$ as one endpoint are good edges.
In total, at least $[(1-\delta')n][(1-\delta')n]\ge (1-3\delta')n^2$ edges between $V_1$ and $V_2$ are good.
If a collection of edges $G_1$ between $V_1$ and $V_2$ contain at least  $(1-\delta/2)m$ good edges, then the number of canonical copies of $K_4$ contained in $G \cup G_1$ is at least
\[
(1-\delta/2)m \cdot (1-6\delta')n^2d^5 \ge (1-\delta) n^4d^6
\ge (1-\delta)n^{|V(K_4)|}\left(\frac{m}{n^2}\right)^{|E(K_4)|}.
\]

The number of ways to choose at least $\delta m$ bad edges from $m$ edges is at most 
\begin{align*}
{3\delta'n^2 \choose \delta m}{n^2 \choose m-\delta m} 
&\leBy{eq:binomial coefficient 1} \left(3\delta'\right)^{\delta m}{n^2 \choose \delta m}{n^2 \choose m-\delta m} \\
&\leBy{eq:binomial coefficient 2} \left(4(3\delta')^{\delta}\right)^m
{n^2 \choose m} 
\le \left(\frac{\beta}{2}\right)^m {n^2 \choose m}
\end{align*}

Therefore, for each good graph $G \in \G(H',n,m,\eps)$, there is at most $(\beta/2)^m{n^2 \choose m}$ extensions to $G' \in \G(4,n,m,\eps)$ such that $G'$ contain at most $(1-\delta) n^4 d^6$ canonical copies of $K_4$. In total, the number of graphs in $G' \in \G(4,n,m,\eps)$ that  contain at most $(1-\delta)n^{|V(K_4)|}\left(\frac{m}{n^2}\right)^{|E(K_4)|}$ canonical copies of $K_4$ is at most 
\[
\left(\frac{\beta}{2}\right)^m{n^2 \choose m}^6+{n^2 \choose m}^5 \left(\left(\frac{\beta}{2}\right)^m {n^2 \choose m} \right)
\le \beta^m {n^2 \choose m}^6,
\]
as required.
\end{proof}

\section{Proof of \Cref{lem:gen_small_bad}}\label{section:lemma_proof}

We first define the family $\GG(H, n_1,\dots,n_\ell, D, \eps)$, a superset of $\G(H, n_1,\dots,n_\ell, D, \eps)$, as follows. $\GG(H, n_1,\dots,n_\ell, D, \eps)$ is the family of $\ell$-partite graphs on $\ell$ pairwise disjoint vertex sets $V_1,\dots,V_\ell$ of size $n_i$ such that for all $1\le i< j\le \ell$, the bipartite graph between $V_i$ and $V_j$
\begin{itemize}
  \item[$(i)$]  has $m'_{i, j} \simeps{\eps} D_{i,j}n_i n_j$ edges and
  \item[$(ii)$] is $(\eps)$-regular.
\end{itemize}
Following the proof in \cite{ProbCountLemma}, we prove the following lemma before proving \Cref{lem:gen_small_bad}. 

\begin{lemma}\label{l:tool2}
For $1\le i\le \ell$, let $x_i=x$ except possibly for one index $i'$, for which $x_{i'}=n$, and write $y_{a,b}=d'x_ax_b$.
Let $d_0(x_1,\dots,x_\ell)\ge 1/x$ be a monotone increasing function, and let $\B(\ell,x_1,\dots,x_\ell,d')$ be a family that is small with respect to $d_0$. Define $f(x)=(1+\eps')x$ and $f(n)=n$. 
For every $\beta>0$ and $\eps'>0$, there exist constants $\eps>0$ and $C$ such that the following holds. Let $x,m,n$ and $d'$ satisfy
\begin{equation}\label{eq: preconditions key lemma}
d:=\frac{m}{n^2}\ge C d_0(x_1,\dots,x_\ell), \quad
x \ge \frac{\log n}{d}, \quad
\eps' d \le  d' \le (1-\eps') d , \quad
\end{equation}

and assume $n$ sufficiently large, then all but at most $\beta^m{n^2\choose m}^{{\ell\choose 2}}$ graphs $G \in \G(\ell,n,m,\eps)$ satisfy the following property: 
the number of $\ell$-tuples $(X_1,\ldots,X_\ell)$ with $X_i \seq V_i$ and $|X_i| = f(x_i)$ for all $1 \leq i \leq \ell$, such that the induced graph $G[X_1,\dots,X_\ell]$ contains a member of the family
\[
\G(\ell,x_1,\dots,x_\ell,d',\eps') \setminus \B(\ell,x_1,\dots,x_\ell,d')
\]
is at least
is at least $(1 - \beta^x) \prod_{i=1}^\ell \binom{n}{f(x_i)}$. 
\end{lemma}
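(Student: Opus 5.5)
We follow the strategy of \cite{ProbCountLemma} (their Lemma 14): count pairs $(G,(X_1,\dots,X_\ell))$ and show that, for a typical random tuple of subsets, the induced graph behaves like a uniformly random element of a family of the right shape. We use double counting, so it suffices to show that the number of pairs $(G,\mathbf X)$ with $G\in\G(\ell,n,m,\eps)$, $|X_i|=f(x_i)$, and $G[\mathbf X]$ containing no member of $\G(\ell,x_1,\dots,x_\ell,d',\eps')\setminus\B$, is at most $\beta^{x}\beta^m\binom{n^2}{m}^{\binom{\ell}{2}}\prod_i\binom{n}{f(x_i)}$. Markov's inequality over $G$ then gives the claim, after replacing $\beta$ by a smaller power.

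\textbf{Step 1 (regular restriction).} Fix $\mathbf X$ and split the bad pairs into two groups. In the first group, some pair $G[X_i,X_j]$ is not $(\eps_1)$-regular with $\simeps{\eps_1} d\,f(x_i)f(x_j)$ edges. Apply \Cref{thm:reg_her} twice (first choose $X_i$, then $X_j$ within the regular pair inherited by $X_i$). Since $f(x_i)\ge x\ge \log n/d\gg C/d$, all but a $\beta_0^{x}$ fraction of choices of $X_i,X_j$ give a subset of size at least $(1-\eps_1)f(x_i)$ that is $(\eps_1)$-regular with density $\simeps{\eps} d$. The slack $f(x)=(1+\eps')x$ is what lets us pass from these large regular subsets to sets of size exactly $x_i$ by deleting vertices. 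This handles the first group with room to spare, giving a factor $\beta_0^{x}$ times all pairs.

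\textbf{Step 2 (counting via the small family).} Now fix $\mathbf X$ and condition on $G[\mathbf X]$ lying in $\GG(\ell,f(x_1),\dots,f(x_\ell),d,\eps_1)$. The edges of $G$ outside $\mathbf X$ are free, so for each edge count $m'_{ij}$ inside we get the factor $\binom{n_in_j-f(x_i)f(x_j)}{m-m'_{ij}}$. Inside $\mathbf X$, every such graph contains, by \Cref{lem:sub_of_eps_is_eps} (after trimming vertex sets to size $x_i$ and subsampling edges), a subgraph in $\G(\ell,x_1,\dots,x_\ell,d',2\eps_1)$. The number of ways a fixed graph in $\B$ can be embedded with density $d$ is controlled by the smallness of $\B$. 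We follow the valid sequence $\mathbb H$ edge class by edge class. For each class we count the sparse $d'$-subgraph and its completion to $d$-density with binomial estimates \eqref{eq:binomial coefficient 1}--\eqref{eq:binomial coefficient 3}. \Cref{def:gen_small} then gives a factor $\beta_1^{y_{a,b}}$ per class relative to the total count. Because $y_{a,b}=d'x_ax_b\ge \eps' d x^2\ge x\log n$, we get $\beta_1^{y}\ll\beta^{x}$. We need $d'\ge C_\beta d_0$ here, which follows from $d\ge Cd_0$ with $C\ge C_\beta/\eps'$ and the monotonicity of $d_0$.

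\textbf{Step 3 (summation).} Summing over the edge counts $m'_{ij}$ (there are polynomially many) and over all $\mathbf X$, the bad pairs make up at most a $\beta^{2x}$ fraction of all pairs. A fraction-counting (averaging) argument then shows that at most $\beta^m$ of the $G$ have a $\beta^{x}$ fraction of bad $\mathbf X$. This needs $\beta^{x}$-type fractions to beat $\beta^m$. That fails directly, since $x\ll m$, so instead we use the independence across disjoint tuples, as in \cite{ProbCountLemma}: choose about $n/x$ disjoint tuples, so the exponents add to $\gtrsim (n/x)\cdot y\ge m\eps'$.

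The main obstacle is Step 2: the double counting in which a fixed sparse bad subgraph is overcounted by the number of denser supergraphs. We have to show that the overcount ratio is at most $e^{O(\eps_1 y\log(1/\eps'))}$, which the $\beta_1^{y}$ from smallness must absorb. The index $i'$ with $x_{i'}=n$ needs separate care, since no trimming is applied there.
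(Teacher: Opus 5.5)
There is a genuine gap, and it sits exactly where the lemma's difficulty lies.

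\textbf{Step 0 and Step 3.} Your opening reduction is false as stated. For one fixed tuple $\mathbf X$, the probability over $G$ that $G[\mathbf X]$ is bad is governed by the roughly $d f(x_a)f(x_b)$ edges inside $\mathbf X$, not by $m$. For example, when $x\ll n$, already $E(X_1,X_2)=\emptyset$ has probability $e^{-O(d f(x_1)f(x_2))}$, and $(\eps)$-regularity of $G$ says nothing about sets of size $x\ll \eps n$. So bad pairs $(G,\mathbf X)$ are far more than a $\beta^{x+m}$ fraction. Markov over $G$ then only gives an exceptional family of relative size about $\beta_1^{\min y_{a,b}}$, with $\min y_{a,b}\le d'nx\le (x/n)m$.

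You notice this in Step~3, but the repair does not work. With about $n/x$ vertex-disjoint tuples, a pair of classes both of size $x$ contributes exponent only $(n/x)\,d'x^2=d'nx\le (x/n)m$, not $\eps' m$; your bound holds only when one class is $V_{i'}$. To reach an exponent of order $m$ you need an $(\alpha n/x)\times(\alpha n/x)$ grid of edge-disjoint pieces:
\begin{itemize}
\item $\alpha n/x$ vertex-disjoint good graphs $G_k^j$ on the first $k$ classes, and
\item for each $j$, $\alpha n/x$ disjoint sets $X_{k+1}^{j,s}\seq V_{k+1}$ (possibly overlapping across different $j$) on which $G_k^j$ has a bad extension.
\end{itemize}
Bounding such configurations needs condition~(ii) of \Cref{def:gen_small}, i.e.\ counting bad \emph{extensions} of a fixed good graph. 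It also needs a pigeonhole over the edge $e_{k,i}$ of the valid sequence at which an extension first becomes bad. The single-tuple bound on $|\B|$ in your Step~2 cannot be multiplied over these overlapping configurations. This is \Cref{hl3:tool2}.

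\textbf{Missing per-graph argument.} Even granting that such configurations are rare, nothing in your plan turns this into the per-graph statement that all but a $\beta^x$ fraction of tuples are good. A $\beta^x$ fraction of bad tuples need not contain many pairwise disjoint ones, since they may all meet a small vertex set heavily. The paper bridges this as follows:
\begin{itemize}
\item A maximal family of the above configurations gives exceptional sets $W_i$ with $|W_i|\le(\ell-1)\alpha n$. For every good $G_k$ avoiding them, it also gives a set $W\seq V_{k+1}$ with $|W|=\alpha n$ outside which no $x$-set yields a bad extension (\Cref{chl3:tool2}).
\item A random $(1+\eps')x$-subset of $V_{k+1}$ meets $W_{k+1}\cup W$ in at least $\eps'x/\ell$ vertices with probability at most $(\beta/2\ell)^x$ (\Cref{hl4:tool2}). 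This, not trimming to regular subsets, is the essential role of the slack $f(x)=(1+\eps')x$.
\item Good tuples are then built class by class by induction on $k$. At each step \Cref{thm:reg_her} is applied twice, against $V_j$ for $j>k+1$ and against the already chosen $X_j'$ for $j\le k$.
\end{itemize}
Your Step~1 supplies only this last ingredient.

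Incidentally, the overcount you flag as the main obstacle is harmless. Since $\binom{N}{y}\binom{N-y}{D-y}=\binom{N}{D}\binom{D}{y}$, that double count is exact once a constant fraction of the $y$-edge subgraphs of a regular graph are themselves regular. The real obstacle is passing from single tuples to a $\beta^m$ bound.
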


To simplify notation, we fix some parameters for the remainder of this section. First, let $\beta > 0$ and $\eps' > 0$ be the constants from \Cref{l:tool2},  and let $\B(\ell,x_1,\dots,x_\ell,d') \seq \G(\ell, x_1,\dots,x_\ell,d')$ be a family which is small with respect to $d_0(x_1,\dots,x_\ell)$.

Furthermore, we assume that $\alpha$ and $\hbeta$ satisfy
\begin{equation}\label{e3:tool2}
  (\ell\alpha)^{\eps'/\ell} < \frac{\beta}{32\ell}
  \enspace, \qquad
  \hbeta 
  \le \left(\frac{\beta}{\ell \cdot 2^{2\ell\alpha + \ell^2+3}} \right)^{k/(\alpha^2 \eps')}
  \enspace
\end{equation}
and 
$C_{\hbeta}$, $\eps_{\hbeta}$, and $n_{\hbeta}$ are defined as in $\Cref{def:gen_small}$.
For $1\le k \le \ell-1$ and $1\le i \le k$, let $e_{k,i},$ $\B(H_{k,i},x_1,\dots,x_\ell,d',\eps) \seq \G(H_{k,i},x_1,\dots,x_\ell,d',\eps)$ be defines as in \Cref{def:gen_small} (\ref{def:gen_small i}) and (\ref{def:gen_small ii}).
Moreover, let
\[
C := \frac{C_{\hbeta}}{\eps'}
\]
and let $x_1,\dots,x_\ell$, $n,m,d$ and $d'$ satisfy the conditions~(\ref{eq: preconditions key lemma}) of \Cref{l:tool2}. 
Observe that this implies 
\[
d'\ge \eps'd\ge \eps'Cd_0(x_1,\dots,x_\ell) 
\ge C_{\hbeta} d_0(x_1,\dots,x_\ell).
\]

We may assume that $x_1,\dots,x_\ell\geq n_{\hbeta}$. 

\begin{claim}\label{hl3:tool2}
Suppose $1\le k< \ell$ and $0<\eps\le \eps_{\hbeta}$ are fixed. 
Let $g(x)=\alpha n/x$ and $g(n)=1$.
Let $x'=\min_{1\le i \le k} x_i$, then there exist at most
\[
\frac{1}{\ell} \beta^m {n^2\choose m}^{{\ell \choose 2}}
\]
graphs in $\G(\ell,n,m)$ that satisfy the following properties:
\begin{itemize}
\item There exist $g(x')$ subgraphs $G_k^j \in \G(k, x_1,\dots,x_\ell,d',\eps) \setminus \B(k, x_1,\dots,x_\ell,d',\eps)$, $1\le j\le g(x')$, such that for all $1 \leq i \leq k$ and $1 \leq j \leq g(x')$, we have $|V(G_k^j) \cap V_i| = x_i$, and for $i$ such that $x_i \neq n$, the graphs $V(G_k^j) \cap V_i$ are pairwise disjoint for $1\le j\le g(x')$.
\item For all $1 \le j \le g(x')$, there exist pairwise disjoint sets $X_{k+1}^{j,s}\seq V_{k+1}$, $1 \le s \le g(x_{k+1})$, of size $x_{k+1}$ such that the graph induced by $V(G_k^j) \cup X_{k+1}^{j,s}$ 
contains an extension of $G_k^j$ that belongs to $\B(k+1, x_1,\dots,x_\ell,d', \eps)$.
\end{itemize}
\end{claim}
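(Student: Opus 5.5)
We bound the number of such graphs by a union bound over all configurations, exactly as in the proof of the corresponding claim in \cite{ProbCountLemma}. For each bad graph we fix a \emph{witness}: the subgraphs $G_k^1,\dots,G_k^{g(x')}$ on $V_1\cup\dots\cup V_k$, and for each $j$ the sets $X_{k+1}^{j,s}$ together with the extension edges of $G_k^j$ into these sets. We then count graphs in $\G(\ell,n,m)$ that contain a given witness. The proof has three steps, and the middle one carries all of the work.

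\textbf{First step: fix the vertex sets.} We choose the vertex sets of all the $G_k^j$ and all the sets $X_{k+1}^{j,s}$. The number of choices is at most
\[
\prod_{i\le k+1} \binom{n}{x_i}^{g(x')g(x_{k+1})}\le 2^{O(\ell n g(x'))}\le 2^{O(\ell\alpha n^2/x)}.
\]
Since $x\ge \log n/d$ and $d=m/n^2$, this exponent is $O(\alpha m)$ up to the $\log$ factor. This gives the factor $2^{2\ell\alpha+\ell^2+3}$ raised to a power of order $m$, which appears in the choice of $\hbeta$ in~(\ref{e3:tool2}). We also count the choices of the edges of each $G_k^j$, which lie in $V_1\cup\dots\cup V_k$.

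\textbf{Second step (the heart of the argument): extension edges are unlikely.} Fix $j$ and $s$. Since $G_k^j\notin\B(k,\dots)$, condition~(\ref{def:gen_small ii}) of \Cref{def:gen_small}, applied successively for $i=1,\dots,k$ along the valid sequence, bounds the number of bad extensions of $G_k^j$ into $X_{k+1}^{j,s}$. This bound is at most $\prod_i \hbeta^{y_{\sigma(i),k+1}}\binom{x_{\sigma(i)}x_{k+1}}{y_{\sigma(i),k+1}}$, where $y_{a,b}=d'x_ax_b$. The edge sets for different pairs $(j,s)$ are disjoint, because the sets $X_{k+1}^{j,s}$ are disjoint over $s$ and the $V(G_k^j)\cap V_i$ are disjoint over $j$ whenever $x_i\neq n$. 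So the bad extension edges altogether form a set of size $M:=\sum_{j,s}\sum_i y_{i,k+1}$, and they are chosen with weight $\hbeta^{M}$ times a product of binomials.

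We then complete the remaining $m-(\text{chosen})$ edges in each pair $(V_i,V_{k+1})$ arbitrarily. Using (\ref{eq:binomial coefficient 3}) followed by (\ref{eq:binomial coefficient 2}), the product of binomials times the completion count is at most $4^{m}\binom{n^2}{m}$ per pair. We need the edges forced by the witness to use a fraction $\Omega(\alpha^2\eps')$ of the $m$ edges of the pairs $(V_i,V_{k+1})$. This holds because $g(x')g(x_{k+1})\cdot d'x_ix_{k+1}\approx \alpha^2 \eps' d n^2=\alpha^2\eps' m$; in the case $x_i=n$ for some $i$ one uses $g(n)=1$ instead, and the count again comes out of order $\alpha\eps' m$. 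Hence $\hbeta^{M}\le \hbeta^{c\alpha^2\eps' m/k}$. By the choice of $\hbeta$ in~(\ref{e3:tool2}), this beats $\ell^{-1}\bigl(\beta/2^{2\ell\alpha+\ell^2+3}\bigr)^m$.

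\textbf{Third step: combine.} Multiplying by the witness-choice count from the first step and by $\binom{n^2}{m}$ for the remaining, unconstrained pairs gives the bound $\frac1\ell\beta^m\binom{n^2}{m}^{\binom{\ell}{2}}$.

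The main obstacle is the bookkeeping in the second step. The binomials $\binom{x_ax_b}{y_{a,b}}$ over the many disjoint blocks, together with the completions, must be absorbed into a single $\binom{n^2}{m}$ per pair with only an exponential loss $c^m$. I would do this by repeatedly applying (\ref{eq:binomial coefficient 3}) to merge the blocks into $\binom{n^2}{M_{i}}$ and then (\ref{eq:binomial coefficient 2}) with the completion. A further difficulty is checking that $M_i$ is a constant fraction of $m$ in both cases $x_i=x$ and $x_i=n$, which is exactly where the definitions of $g$ and $f$ enter.
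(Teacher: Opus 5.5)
Your first and third steps match the paper's counting. The second step, which you rightly call the heart of the argument, has a genuine gap.

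Condition~\ref{def:gen_small ii} of \Cref{def:gen_small} only says that a graph in $\G(H_{k,i-1},\dots,\eps)\setminus\B(H_{k,i-1},\dots,\eps)$ has few extensions lying in $\B(H_{k,i},\dots,\eps)$. It gives no bound on extensions that stay outside $\B$. It also gives no information once a partial extension has already entered $\B$. So for a fixed pair $(j,s)$ you cannot multiply the factors $\hbeta^{y_{\sigma_k(i),k+1}}$ over $i=1,\dots,k$, and your product bound is false in general.

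Write $G(j,s,i)$ for the partial extension after the edges to $V_{\sigma_k(1)},\dots,V_{\sigma_k(i)}$ are chosen. A bad final extension only guarantees a \emph{first} index $i$ with $G(j,s,i)\in\B(H_{k,i},\dots,\eps)$. You therefore gain exactly one factor $\hbeta^{y_{\sigma_k(i),k+1}}$, and it lives in the single pair $(V_{\sigma_k(i)},V_{k+1})$. Since this index depends on $(j,s)$, the missing idea is the paper's pigeonhole step:
\begin{itemize}
\item the classes $\B(i)$ of pairs $(j,s)$ whose first failure is at step $i$ partition all $g(x')g(x_{k+1})$ pairs, so some $i$ has $|\B(i)|\ge g(x')g(x_{k+1})/k$;
\item all the $\hbeta$-weight is then placed in the one pair $(V_{\sigma_k(i)},V_{k+1})$, and the other $k-1$ pairs are bounded trivially by $\binom{n^2}{m}^{k-1}$;
\item finally one takes a union bound over $i$.
\end{itemize}
This is where the $1/k$ in $\hbeta^{\alpha^2\eps' m/k}$ and the factor $\ell$ in the subclaim come from. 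Your exponent $M$ sums over all $i$, so nothing in your argument actually produces that $1/k$.

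Second, your disjointness claim fails when $x_{\sigma_k(i)}=n$ and $k\ge 2$. Then $x'=x$ and $g(x')=\alpha n/x$, but $V(G_k^j)\cap V_{\sigma_k(i)}=V_{\sigma_k(i)}$ for every $j$. The sets $X_{k+1}^{j,s}$ are only disjoint over $s$ for each fixed $j$. So the edge sets $E(V_{\sigma_k(i)},X_{k+1}^{j,s})$ may overlap for different $j$. In fact they must overlap: for $x=\Theta(dn)$, as in the application, their total size $g(x')g(x_{k+1})\,d'nx=\alpha^2d'n^3/x$ is of order $\alpha^2\eps'n^2\gg m$.

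Your remark that ``one uses $g(n)=1$'' does not help here. The value $g(n)=1$ enters only when $x'=n$ (that is, $k=1$ and $x_1=n$) or when $x_{k+1}=n$. The paper treats this case separately as its Case~1. From $|\B(i)|\ge g(x')g(x_{k+1})/k$ it picks one $j$ with $|\B(i,j)|\ge g(x_{k+1})/k$ and uses only disjointness over $s$. This gives the exponent $\alpha\eps' m/k$.
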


\begin{proof}
We prove the claim by constructing all graphs that satisfy the above properties.
Firstly, we select bipartite graphs with $m$ edges between $V_i$ and $V_{i'}$, for all pairs $1\le i < i' \le \ell$, $i' \neq k+1$. There are at most
\[
{n^2\choose m}^{{\ell\choose 2}-k}
\] 
ways to do that. Secondly, for all $1 \le j \le g(x')$, $1 \le s \le g(x_{k+1})$, we choose pairwise vertex disjoint set $X_i^j = V(G_k^j) \cap V_i$ (when $x_i \neq n)$ and the extension sets $X_{k+1}^{j,s}$. There are fewer than
\[
{n \choose x}^{k \alpha n/x + (\alpha n/x)^2} 
\le n^{\alpha kn + \alpha^2 n^2/x} \le 4^{(\ell \alpha n^2/x)\log n} 
\leBy{eq: preconditions key lemma} 2^{2\ell \alpha m}
\] 
ways to do that. Next, we choose the graphs $G_k^j$. For all $1 \leq i < i' \leq k$, one has to select $y_{i,i'}$ edges from the edges between the pair $(X_i^j, X_{i'}^j)$. Note that all the edges involved here are pairwise disjoint for all $j$, so the sum of the number of edges is at most $m$.
Hence, there are at most
\[
\begin{split}
  \prod_{1 \leq i < i' \leq k} \prod_{j = 1}^{g(x')}
    {|E(X_i^j, X_{i'}^j)| \choose y_{i,i'}}
  &\leBy{eq:binomial coefficient 3} \prod_{1 \leq i < i' \leq k}
    {\sum_{j = 1}^{g(x')} |E(X_i^j, X_{i'}^j)| 
    \choose g(x')y_{i,i'}} \\
  &\leq \prod_{1 \leq i < i' \leq k} {m \choose g(x')y_{i,i'}}
  \leq 2^{\ell^2 m}
\end{split}
\]
ways to choose the graphs $G_k^j$. Next, we choose bad extensions of $G_k^j$ to the sets $X_{k+1}^{j,s}$, and fill the remaining edges between $V_1,\dots ,V_k$ and $V_{k+1}$. 
At this point, we need the following claim, which we defer proving until later.

\begin{subclaim}\label{cl:bad-extensions}
There are at most
$\ell4^m\hbeta^{\alpha^2 \eps' m/k}\binom{n^2}{m}^k$
ways to make these choices.   
\end{subclaim}

Using Claim~\Cref{cl:bad-extensions}, we have shown that there are at most
\[
{n^2\choose m}^{{\ell\choose 2}-k} \cdot 2^{2\ell\alpha m}\cdot 2^{\ell^2 m}
\cdot \ell4^m\hbeta^{\alpha^2 \eps' m/k} \cdot \binom{n^2}{m}^k
\le (2^{2\ell\alpha + \ell^2+3} \hbeta^{\alpha^2 \eps'/k} )^m 
\binom{n^2}{m}^{\binom{l}{2}}
\le \left(\frac{\beta}{\ell}\right)^m\binom{n^2}{m}^{\binom{l}{2}}
\]
graphs in $\G(\ell,n,m)$ that satisfy the properties of \Cref{hl3:tool2}. 
This completes the proof of the main step.

\begin{proof}[Proof of subclaim~\ref{cl:bad-extensions}]
Recall that $v_{\sigma_k(i)}$ is the endpoint of $e_{k,i}$ that is not $v_{k+1}$, and that $H_{k,0}=K_k ,H_{k,i}=H_{k,i-1}+e_{k,i}$. 
For a fixed $j,s$, we are going to count the number of bad extensions $G_k^j$ to the sets $X_{k+1}^{j,s}$ by choosing the edges between $V_{\sigma_k(i)}\cap G_k^j$ and $X_{k+1}^{j,s}$ for $i=1,2, \dots,k$ in this order.
We denote $G(j,s,i')$ for the graph $G_k^j \cup X_{k+1}^{j,s}$ with edges between $\left(\cup_{i=1}^{i'} V_{\sigma_k(i)}\right) \cap G_k^j$ and $X_{k+1}^{j,s}$ already chosen.

For a fixed $i,j$, let $\B(i,j)$ be the set of $s$ such that $G(j,s,i'') \in \G(H_{k,{i''}},n_1,\dots,n_\ell,d,\eps) \setminus \B(H_{k,{i''}},n_1,\dots,n_\ell,d,\eps)$ for all $1 \le i'' \le i-1$
but $G(j,s,i) \in \B(H_{k,i},n_1,\dots,n_\ell,d,\eps)$. Let $\B(i)=\{(j,s) \; : \; s \in \B(i,j)\}$.
For all $g(x')g(x_{k+1})$ pairs of $(j,s)$, we have $G(j,s,0) \in \G(H_{k,0},n_1,\dots,n_\ell,d,\eps)\setminus \B(H_{k,0},n_1,\dots,n_\ell,d,\eps)$ and $G(j,s,k) \in \B(k+1,n_1,\dots,n_\ell,d,\eps)=\B(H_{k,k},n_1,\dots,n_\ell,d,\eps)$ (as we want all extension to be bad). Therefore, the sets $\B(i)$ partition the set of all pairs $(j,s)$, and note that $|\B(i)|=\sum_j |\B(i,j)|$. 
Since $\sum_i |\B(i)|=g(x')g(x_{k+1})$, by pigeon hole principles, there exists $i$ in $\{1,\dots,k\}$ such that $|\B(i)|\ge g(x')g(x_{k+1})/k$. So all bad extensions must have $|\B(i)|\ge g(x')g(x_{k+1})/k$ for some $i$.
It is enough to show that for each $i$, there's only at most $4^m\hbeta^{\alpha^2 \eps' m/k}\binom{n^2}{m}^k$ way to extend $G_k^j$ to the sets $X_{k+1}^{j,s}$ in which $|\B(i)|\ge g(x')g(x_{k+1})/k$. Then we can use the union bound to show that there are at most $\ell4^m\hbeta^{\alpha^2 \eps' m/k}\binom{n^2}{m}^k$ ways to extend to all bad extensions, which concludes \Cref{cl:bad-extensions}.

To do so, we consider 2 cases. 

\textbf{Case 1:} $x_{\sigma_k(i)}=n$. In this case, all other $x_{i'}$ have size $x$, and $y_{\sigma_k(i),k+1}=d'nx$. Since $|\B(i)|\ge g(x')g(x_{k+1})/k$, there is $j$ such that $|\B(i,j)|\ge g(x_{k+1})/k =\alpha n/(kx)$. Here we have at least $\alpha n/(kx)$ bad extensions, all of which are pairwise edge-disjoint. Therefore, by \Cref{def:gen_small}~\ref{def:gen_small ii}, the number of ways to choose such extensions is at most
\[
\prod_{s\in \B(i,j)} \left[\hbeta^{y_{\sigma_k(i),k+1}} \binom{x_{\sigma_k(i)}x_{k+1}}{y_{\sigma_k(i),k+1}}\right] 
\prod_{s\notin \B(i,j)} \binom{x_{\sigma_k(i)}x_{k+1}}{y_{\sigma_k(i),k+1}}
\le \hbeta^{|\B(i,j)|d'nx}\binom{nx}{d'nx}^{g(x_{k+1})}
\le \hbeta^{\alpha\eps'm/k}\binom{nx}{d'nx}^{\alpha n/x},
\]
as $|\B(i,j)|d'nx \ge \alpha d' n^2/k \geBy{eq: preconditions key lemma} 
\alpha \eps' dn^2/k= \alpha \eps'm/k$.
We now choose the remaining edges between $V_{\sigma_k(i)}$ and $V_{k+1}$. There are at most 
\[
\binom{n^2}{m-g(x_{k+1})y_{\sigma_k(i),k+1}}=\binom{n^2}{m-(\alpha n/x)d'nx}
\]
ways to do that. There are also at most $\binom{n^2}{m}^{k-1}$ way to choose edges between $V_{k+1}$ and each of $V_1,\dots, V_k$ that is not $V_{\sigma_k(i)}$. In total, the number of graphs such that $|\B(i)|\ge g(x')g(x_{k+1})/k$ is at most
\begin{align*}
\hbeta^{\alpha\eps'm/k}\binom{nx}{d'nx}^{\alpha n/x}\binom{n^2}{m-(\alpha n/x)d'nx}\binom{n^2}{m}^{k-1}
&\leBy{eq:binomial coefficient 3} 
\hbeta^{\alpha\eps'm/k}\binom{n^2}{\alpha d'n^2}\binom{n^2}{m-\alpha d'n^2} 
\binom{n^2}{m}^{k-1}\\
&\leBy{eq:binomial coefficient 2} 
4^m\hbeta^{\alpha \eps' m/k}\binom{n^2}{m}^{k}
\le 4^m\hbeta^{\alpha^2 \eps' m/k}\binom{n^2}{m}^{k},
\end{align*}
as required.

\textbf{Case 2:} $x_{\sigma_k(i)}=x$. In this case, all $|\B(i)|$ bad extensions are independent of each other, and so by \Cref{def:gen_small}~\ref{def:gen_small ii}, the number of such extensions is at most 
\[
\prod_{(j,s)\in \B(i)} \left[\hbeta^{y_{\sigma_k(i),k+1}} \binom{x_{\sigma_k(i)}x_{k+1}}{y_{\sigma_k(i),k+1}}\right] 
\prod_{(j,s)\notin \B(i)} \binom{x_{\sigma_k(i)}x_{k+1}}{y_{\sigma_k(i),k+1}}
\le \hbeta^{|\B(i)|y_{\sigma_k(i),k+1}}\binom{xx_{k+1}}{y_{\sigma_k(i),k+1}}^{g(x')g(x_{k+1})}.
\]
We now choose the remaining edges between $V_{\sigma_k(i)}$ and $V_{k+1}$. There are at most 
\[
\binom{n^2}{m-g(x')g(x_{k+1})y_{\sigma_k(i),k+1}}
\]
ways to do that. There are also at most $\binom{n^2}{m}^{k-1}$ way to choose edges between $V_{k+1}$ and each of $V_1,\dots, V_k$ that is not $V_{\sigma_k(i)}$. 
Using $g(x')g(x_{k+1})y_{\sigma_k(i),k+1}\ge \alpha^2 (n/x)(n/x_{k+1})d'xx_{k+1}\ge \alpha^2 n^2(\eps'd)=\alpha^2 \eps' m$, the number of graphs such that $|\B(i)|\ge g(x')g(x_{k+1})/k$ is at most
\begin{align*}
&\hbeta^{|\B(i)|y_{\sigma_k(i),k+1}}\binom{xx_{k+1}}{y_{\sigma_k(i),k+1}}^{g(x')g(x_{k+1})}\binom{n^2}{m-g(x')g(x_{k+1})y_{\sigma_k(i),k+1}}\binom{n^2}{m}^{k-1} \\   
&\leBy{eq:binomial coefficient 3} 
\hbeta^{(g(x')g(x_{k+1})/k)y_{\sigma_k(i),k+1}}\binom{g(x')g(x_{k+1})xx_{k+1}}{g(x')g(x_{k+1})y_{\sigma_k(i),k+1}}\binom{n^2}{m-g(x')g(x_{k+1})y_{\sigma_k(i),k+1}}\binom{n^2}{m}^{k-1}  \\
&\le 
\hbeta^{\alpha^2 \eps' m/k}\binom{n^2}{g(x')g(x_{k+1})y_{\sigma_k(i),k+1}}\binom{n^2}{m-g(x')g(x_{k+1})y_{\sigma_k(i),k+1}}\binom{n^2}{m}^{k-1}  \\
&\leBy{eq:binomial coefficient 3} 
4^m\hbeta^{\alpha^2 \eps' m/k}\binom{n^2}{m}^{k},
\end{align*}
as required.
\end{proof}

\end{proof}

\begin{claim}\label{chl3:tool2}
Suppose $0 < \eps \le \eps_{\hbeta}$ is fixed. Then all but at most
\[
\beta^m {n^2\choose m}^{{\ell \choose 2}}
\]
graphs in $\G(\ell,n,m)$ satisfy the following property.
For every $1\le i\le \ell$ there exist sets $W_i\subseteq V_i$ such that $W_i=\emptyset \text{ if } x_i=n$, and $|W_i|\le \alpha(\ell-1)n \text{ otherwise}$. Let $1\le k<\ell$, and suppose
\[
G_k \in
\G(k,x_1,\dots,x_\ell,d',\eps)
\setminus
\B(k,x_1,\dots,x_\ell,d',\eps)
\]
satisfies $|V(G_k)\cap V_i| = x_i$ and $V(G_k)\cap W_i=\emptyset$ for all $1\le i\le k$. Then there exists a set $W$ such that
\[
W=\emptyset \text{ if } x_{k+1}=n,
\qquad
W\subseteq V_{k+1} \text{ with } |W|=\alpha n \text{ otherwise},
\]
and the following holds. For every set
$X\subseteq V_{k+1}\setminus W$ with $|X|=x$, the graph induced by $V(G_k)\cup X$ contains no extension of $G_k$
belonging to $\B(k+1,x_1,\dots,x_\ell,d',\eps)$.
\end{claim}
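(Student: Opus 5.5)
We will derive Claim~\ref{chl3:tool2} from Claim~\ref{hl3:tool2} by a union bound over $k$ and a greedy construction of the sets $W_i$. By Claim~\ref{hl3:tool2}, for each $1\le k<\ell$ at most $\frac1\ell\beta^m\binom{n^2}{m}^{\binom{\ell}{2}}$ graphs in $\G(\ell,n,m)$ have the configuration described there. Summing over $k=1,\dots,\ell-1$, all but at most $\beta^m\binom{n^2}{m}^{\binom{\ell}{2}}$ graphs avoid every such configuration for every $k$. We fix such a graph $G$ and show that it has the property stated in Claim~\ref{chl3:tool2}.

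The sets $W_i$ are built greedily, separately for each $k$, and then combined. Fix $k$. We call a copy $G_k\in\G(k,x_1,\dots,x_\ell,d',\eps)\setminus\B(k,x_1,\dots,x_\ell,d',\eps)$ \emph{defective} if every candidate set $W\seq V_{k+1}$ (with $|W|=\alpha n$ when $x_{k+1}=x$, and $W=\emptyset$ when $x_{k+1}=n$) fails. The first step is to check that a defective $G_k$ has $g(x_{k+1})$ pairwise disjoint sets $X^{s}\seq V_{k+1}$ of size $x_{k+1}$, each giving a bad extension. Assume $x_{k+1}=x$. We pick such sets greedily, letting $W$ be the union of those already chosen. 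As long as fewer than $\alpha n/x$ sets have been picked, $|W|<\alpha n$, and we pad $W$ to size $\alpha n$. Since $G_k$ is defective, some $X\seq V_{k+1}\setminus W$ gives a bad extension, and this $X$ is disjoint from all earlier choices. If $x_{k+1}=n$, then $g(n)=1$ and the single set $X=V_{k+1}$ suffices.

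The second step is to pick defective copies $G_k^1,G_k^2,\dots$ greedily so that their vertex sets in each $V_i$ with $x_i=x$ (for $i\le k$) are pairwise disjoint. We let $W_i^{(k)}$ be the union of $V(G_k^j)\cap V_i$ over the chosen copies. If we could choose $g(x')=\alpha n/x'$ of them, we would obtain exactly the configuration excluded by Claim~\ref{hl3:tool2}, since each $G_k^j$ carries its own disjoint family $X_{k+1}^{j,s}$ from the first step. Hence the greedy process stops after fewer than $\alpha n/x$ copies, and $|W_i^{(k)}|<\alpha n$. Every defective $G_k$ must then meet some $W_i^{(k)}$, for otherwise the process could have continued.

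Finally, we set $W_i=\bigcup_{k\ge i}W_i^{(k)}$ for $x_i=x$ and $W_i=\emptyset$ otherwise. Then $|W_i|\le\alpha(\ell-1)n$. Any $G_k$ disjoint from all $W_i$ is non-defective, and this is exactly the required conclusion. The main subtlety is the index $i'$ with $x_{i'}=n$. Copies then share all of $V_{i'}$, so disjointness is only required in the classes of size $x$, which is how Claim~\ref{hl3:tool2} is phrased. We also have to check that the sizes work out with $x'=\min_{i\le k}x_i$: if some $x_i=n$ with $i\le k$, the count $g(x')=\alpha n/x$ still matches the greedy bound. The remaining case, where all of $V_1,\dots,V_k$ have size $n$, forces $k=1$ with $x_1=n$, so $g(x')=1$. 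In that case a single defective copy already contradicts the choice of $G$, and no $W_i$ is needed.
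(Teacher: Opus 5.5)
Your proposal is correct and follows essentially the same route as the paper. You take a union bound over $k$ via Claim~\ref{hl3:tool2}, build a maximal family of copies with pairwise disjoint vertex sets in the classes of size $x$, take the union over $k$ to get the $W_i$, and use maximality to produce $W$. You also spell out two points the paper leaves implicit: the greedy step showing that if no set $W$ works then there are $g(x_{k+1})$ pairwise disjoint bad extension sets, and the corner case $k=1$, $x_1=n$.
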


\begin{proof}
We apply \Cref{hl3:tool2} for all $1\le k\le \ell-1$. This yields that all but at most
\[
\frac{\ell-1}\ell\beta^m{n^2\choose m}^{{\ell\choose 2}} 
\le \beta^m{n^2\choose m}^{{\ell\choose 2}}
\]
graphs in $\G(\ell, n, m)$ do not satisfy the properties stated in \Cref{hl3:tool2} for {\em any} $1\le k<\ell$. Now we show that all these graphs also satisfy the properties given in \Cref{chl3:tool2}.

Let $1\le k\le\ell-1$ and consider a maximum set of subgraphs $G_k^j \in \G(k, x_1,\dots,x_k,d', \eps) \setminus \B(k, x_1,\dots,x_k,d',\eps)$ such that for all $1 \leq i \leq k$, we have $|V(G_k^j) \cap V_i| = x_i$, $V(G_k^j) \cap V_i$ are pairwise disjoint for all $i$ such that $x_i \neq n$, and for all $j$, there exist pairwise vertex disjoint sets $X_{k+1}^{j,s}$, $1 \le s \le g(x_{k+1})$, of size $|X_{k+1}^{j,s}|=x_{k+1}$ such that $G[V(G_k^j) \cup X_{k+1}^{j,s}]$ contains an extension of $G_k^j$ that belongs to $\B(k+1, x_1,\dots,x_{k+1},d', \eps)$.

From \Cref{hl3:tool2} we immediately deduce that there can exist at most $g(x')$ such subgraphs $G_k^j$. That is, if we take the union of all these graphs for all $i$ such that $x_i \neq n$ and subsequently take the union over all $1\le k\le\ell-1$ of all their vertex sets, then we obtain sets $W_i$ of size at most $(\ell-1)\alpha n$ (and $W_i=\emptyset$ if $x_i=n$).

As we have chosen maximum sets of subgraphs $G_k^j$, we know that if we choose a $k$-tuple $(Z_1,\ldots,Z_k)$ with $Z_i \seq V_i \setminus W_i$ and $|Z_i| = x_i$ for all $1 \leq i \leq k$ that induces a graph containing $G_k \in \G(k, x_1,\dots,x_k,d', \eps) \setminus \B(k, x_1,\dots,x_k,d',\eps)$, there exists a set $W \subseteq V_{k+1}$ of size $|W|=\alpha n_{k+1}$ (or $0$ if $x_{k+1}=n$) such that for any set $Z \seq V_{k+1}\setminus W$ of size $x_{k+1}$ the induced graph $G[Z_1,\dots, Z_k, Z]$ contains no extension of $G_k$ that belongs to $\B(k+1, x_1,\dots,x_{k+1},d', \eps)$.
\end{proof}

\begin{claim}\label{hl4:tool2}
Let $V$ be a set of size $n$ and let $W\subseteq V$ be a set of size
$|W|\le \ell\alpha |V|$. Then the number of sets
$\hX \seq V$ of size $|\hX|=(1 + \eps')x$ that contain at least $\frac{1}{\ell}\eps' x$ vertices from $W$ is at most
\[
\left(\frac\beta{2\ell}\right)^x \binom{n}{(1+ \eps')x}.
\]
\end{claim}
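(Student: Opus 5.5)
This is a direct counting estimate; I would prove it by a union bound over how many elements of $W$ the set $\hX$ picks up. Write $s=(1+\eps')x$ and $t=\lceil \eps' x/\ell\rceil$. Every set $\hX$ with $|\hX\cap W|\ge t$ arises by first choosing a $t$-subset $T\subseteq W$ and then choosing the remaining $s-t$ elements from $V$. Hence the number of such sets is at most
\[
\binom{|W|}{t}\binom{n}{s-t}\le \binom{\ell\alpha n}{t}\binom{n}{s-t}.
\]
The overcounting, where a single $\hX$ contains several $t$-subsets of $W$, only helps us.

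Next I bound each factor. By \eqref{eq:binomial coefficient 1} with $x\leftarrow \ell\alpha$, I get $\binom{\ell\alpha n}{t}\le (\ell\alpha)^t\binom{n}{t}$. By \eqref{eq:binomial coefficient 2} with $a\leftarrow n$, $b\leftarrow s$, $c\leftarrow t$, I get $\binom{n}{t}\binom{n}{s-t}\le 4^{s}\binom{n}{s}$. Together these give the bound
\[
(\ell\alpha)^{t}\,4^{(1+\eps')x}\binom{n}{(1+\eps')x}.
\]

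It remains to absorb the factor $4^{(1+\eps')x}\le 16^x$ (using $\eps'\le 1$) into $(\ell\alpha)^t$. Since $\ell\alpha<1$ and $t\ge \eps' x/\ell$, we have $(\ell\alpha)^t\le \bigl((\ell\alpha)^{\eps'/\ell}\bigr)^x$. By the choice \eqref{e3:tool2}, $(\ell\alpha)^{\eps'/\ell}<\beta/(32\ell)$. The total is therefore at most $\bigl(16\beta/(32\ell)\bigr)^x\binom{n}{s}=\bigl(\beta/(2\ell)\bigr)^x\binom{n}{s}$, which is the claimed bound. This is exactly why the constant $32\ell$ appears in \eqref{e3:tool2}.

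The only delicate points are bookkeeping. First, $s\le n$ is needed for \eqref{eq:binomial coefficient 2} to apply; this holds since $x\le n/2$ in our range. Second, if $(1+\eps')x$ and $\eps' x/\ell$ are not integers, I would round them and absorb the rounding into the slack of the constants. No step here is a genuine obstacle.
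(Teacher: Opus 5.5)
Your proof is correct and is essentially the paper's argument: the paper likewise bounds the count by $\binom{|W|}{\eps' x/\ell}\binom{n}{(1+\frac{\ell-1}{\ell}\eps')x}$, applies \eqref{eq:binomial coefficient 1} and \eqref{eq:binomial coefficient 2} to get $\bigl[4^{1+\eps'}(\ell\alpha)^{\eps'/\ell}\bigr]^x\binom{n}{(1+\eps')x}$, and absorbs $4^{1+\eps'}\le 16$ via \eqref{e3:tool2}. One small point: the side condition \eqref{eq:binomial coefficient 2} actually needs is $(1+\eps')x\le n$, which is automatic (otherwise there are no such sets). Your stated condition $x\le n/2$ is not the right one, and it can fail in the application, where $x=(1-\eps')m/n$ may be close to $n$.
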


\begin{proof}
Using~(\ref{eq:binomial coefficient 1}) and~(\ref{eq:binomial coefficient 2}), we deduce that
\begin{equation*}
\begin{split}
  &\bigg|\left \{ \hX \subseteq V: |\hX|=(1+ \eps')x
    \wedge |\hX \cap W|\ge \frac{1}{\ell}\eps'x \right\}\bigg| \\
  &\leq {|W| \choose \frac{1}{\ell}\eps'x} 
     {|V|\choose (1 + \frac{\ell - 1}{\ell}\eps')x} \\
  &\leq \left[4^{1+\eps'} \cdot
    (\ell\alpha)^{\frac{1}{\ell}\eps'}\right]^x 
    \cdot {|V|\choose (1 + \eps')x} 
  \;\leBy{e3:tool2}\; \left(\frac\beta{2\ell}\right)^x 
    \cdot {|V|\choose (1 + \eps')x}
\end{split}
\end{equation*}
as claimed.
\end{proof}

\begin{proof}[Proof of \Cref{l:tool2}]

First, we specify how to choose $\eps$. Let $\eps'' := \min\{\eps_{\hbeta}, \, \eps'/\ell\}$. We repeatedly invoke \Cref{thm:reg_her}. Applying \Cref{thm:reg_her} with $\beta \leftarrow \beta / (2\ell^2)$ and $\eps' \leftarrow \eps'' / 6$ yields constant $\eps_1 \leftarrow \eps$. We apply \Cref{thm:reg_her} again with parameters $\beta \leftarrow \beta / (2\ell^2)$ and $\eps' \leftarrow \eps_1 / 3$, and obtain $\eps$. We may assume that $\eps' \geq \eps'' \geq 2\eps_1 \geq 2\eps$. 

Since we have $\eps \leq \eps_{\hbeta}$, \Cref{chl3:tool2} asserts that for all but $\beta^m{n^2 \choose m}^{{\ell\choose 2}}$ of the graphs in $\G(\ell,n,m,\eps)$, there exist suitable sets $W_i\subseteq V_i$ for all $1 \leq i \leq \ell$. Suppose $G \in \G(\ell,n,m,\eps)$ is such a graph. We shall show that, in fact, $G$ satisfies the properties stated in \Cref{l:tool2}, that is, most $\ell$-tuples $(X_1,\dots,X_\ell)$ of size $|X_i|=f(x_i)$ in $G$ are good in the following sense. For $k \ge 1$, we call $(X_1,\dots,X_k)$ a {\em good} $k$-tuple if it satisfies the following properties:
\begin{itemize}
	\item For all $1 \le i \le k$, we have $X_i \subseteq V_i$ and $|X_i|=f(x_i)$, and there exists a subset $X_i' \seq X_i \setminus W_i$ of size $x_i$ such that $X'_i$ forms an $(\eps_1)$-regular pair of density $d'_{i,j} \simeps{\eps_1} d$ with each set $V_j$, $k < j \leq \ell$ ($d'_{i,j}$ here are not necessary the same as $d'$).
	\item The induced graph $G[X_1',\dots,X_k']$ contains a member of the family $\G(k, x_1,\dots,x_k,d', \eps'') \setminus \B(k, x_1,\dots,x_k,d',\eps'')$ as a subgraph.
\end{itemize}
Note that a good $1$-tuple is just a set $X_1 \seq V_1$ of size $f(x_1)$ that contains a subset $X_1' \seq V_1 \setminus W_1$ of size $|X_1'| = x_1$ which forms an $(\eps_1)$-regular pair of density $d'_{1, j} \simeps{\eps_1} d$ with each set $V_j$, $2 \leq j \leq \ell$. Formally, we also introduce the unique $0$-tuple, which is always good.

We shall show by induction on $k$, $0 \leq k \leq \ell$, that $G$ contains at least
\begin{equation}\label{e5:tool2}
\prod_{i=1}^k \left((1 - \beta^x / \ell){n\choose f(x_i)}\right)
\end{equation}
good $k$-tuples. In particular, for $k = \ell$, there are at least
\[
  \prod_{i=1}^\ell \left((1 - \beta^x / \ell) \binom{n}{f(x_i)}\right)
  \ge (1-\beta^x) \prod_{i=1}^\ell \binom{n}{f(x_i)}
\]
$\ell$-tuples in $G$ inducing a graph that contains a member of the family $\G(k, x_1,\dots,x_k,d', \eps'') \setminus \B(k, x_1,\dots,x_k,d',\eps'')$ and thus of $\G(k, x_1,\dots,x_k,d', \eps') \setminus \B(k, x_1,\dots,x_k,d')$  as a subgraph. 
Hence, it remains to prove~\eqref{e5:tool2}. 

The base case, when $k = 0$, clearly holds. Hence, suppose~\eqref{e5:tool2} holds for $k$, $0 \leq k < \ell$. We shall show that it also holds for $k + 1$. Consider any good $k$-tuple $(X_1,\dots,X_k)$ and let $G_k$ denote the subgraph of $G[X_1',\dots,X_k']$ that is a member of the family $\G(k, x_1,\dots,x_k,d', \eps'') \setminus \B(k, x_1,\dots,x_k,d',\eps'')$. If $k = 0$, $G_k$ is just the empty graph. We shall show that one can extend this tuple so that it forms a good $(k+1)$-tuple in at least
\[
(1-\beta^x /\ell)\cdot \binom{n}{f(x_{k+1})}
\]
many ways. Clearly, this completes the proof of~\eqref{e5:tool2}.

First assume $x_{k+1}=n$. In this case $W=W_{k+1}=\emptyset$ and we can take $X'_{k+1}=X_{k+1}=V_{k+1}$. The condition that $X'_{k+1}$ forms an $(\eps_1)$-regular pair of density $d$ with each set $V_j$, $k+1 < j \leq \ell$ is inherits from $G$.
Since $\eps_1 \le \eps''/2$ and by induction step, $X'_{k+1}$ forms an $(\eps''/2)$-regular pair of density $d'$ with each set $V_j$, $j<k$. Hence, the induced graph $G[X_1',\dots,X_k']$ contains a member of the family $\GG(k, x_1,\dots,x_k,d, \eps''/2)$ as a subgraph. By \Cref{chl3:tool2} and the choice of $X'_{k+1} \seq V_{k+1} \setminus (W_{k + 1} \cup W)$, this induced graph $G[X_1',\dots,X_k']$ contains no member of the family $\B(k, x_1,\dots,x_k,d',\eps'')$ as a subgraph. 
It remains to provide any subgraph of $G[X_1',\dots,X_{k+1}']$ that is a member of the family $\G(k+1,x_1,\dots,x_{k+1},d',\eps'')$. This follows by applying \Cref{lem:sub_of_eps_is_eps} to each pair $(X'_{k+1}, X'_j)$, $1 \leq j \leq k$.

Now we can assume $x_{k+1}=x$. Observe that by \Cref{thm:reg_her} and the choice of $\eps$, we know that all but at most
\begin{equation}\label{eq:good sets 1}
(\ell - k - 1)\left(\frac{\beta}{2\ell^2}\right)^{(1 + \eps')x}\binom{n}{(1 + \eps')x} 
\end{equation}
of the subsets of $V_{k + 1}$ of size $(1 + \eps')x$ contain a family of subsets $(\tX_{k + 1}^j)_{k + 1 < j \leq \ell}$, each of which has size at least
\[
  (1 - \eps_1/3)(1 + \eps') x \geByM{\ell \eps_1 \leq \eps'} \left(1 + \frac{\ell - 1}{\ell}\eps'\right)x
\]
and $\tX_{k+1}^j$ forms an $(\eps_1/ 3)$-regular pair of density $d(\tX_{k+1}^j, V_j) \simeps{\eps} d$ with $V_j$. Employing \Cref{thm:reg_her} again, we have by the choice of $\eps_1$ that all but at most
\begin{equation}\label{eq:good sets 2}
k\left(\frac{\beta}{2\ell^2}\right)^{(1 + \eps')x}{n \choose (1 + \eps')x}
\end{equation}
of the subsets of $V_{k+1}$ of size $(1 + \eps')x$ contain a family of subsets $(\tX_{k+1}^j)_{1 \leq j \leq k}$, each of which has size at least
\[
  (1 - \eps''/6)(1 + \eps') x 
  \geByM{\ell \eps'' \leq \eps'} \left(1 + \frac{\ell - 1}{\ell}\eps'\right) x
\] 
and $\tX_{k+1}^j$ forms an $(\eps''/6)$-regular pair of density $d(\tX_{k+1}^j, X_j') \simeps{3\eps_1} d$ with $X_j'$. The bound on $d(\tX_{k + 1}^j, X_j')$ follows from
\begin{equation*}
\begin{split}
  (1 - 3\eps_1)d
  &\leq (1 - \eps_1)^2d \\
  &\leq (1 - \eps_1)d(V_{k+1}, X'_j) \\
  &\leq d(\tX_{k+1}^j, X'_j) \\
  &\leq (1 + \eps_1)d(V_{k+1}, X'_j) \\
  &\leq (1 + \eps_1)^2d
  \leq (1 + 3\eps_1)d .
\end{split}
\end{equation*}

Moreover, if $k \geq 1$, then by \Cref{chl3:tool2} there exists a set $W \subseteq V_{k+1}$ of size at most $\alpha n$ such that the graph induced by any subset of $V_{k+1}\setminus W$ of size $x$ and the sets $X'_1,\dots,X'_k$ is not a member of $\B(k+1,x_1,\dots,x_{k+1},d',\eps)$. For $k = 0$, we pick $W$ as the empty set. Hence, \Cref{hl4:tool2} yields that at least
\begin{equation}\label{eq:good sets 3}
\left(1 - \left(\frac{\beta}{2\ell}\right)^x\right) {n\choose (1 + \eps')x}
\end{equation}
subsets $X_{k+1}$ of $V_{k+1}$ contain a subset $\hX_{k+1}$ of size at least $\left(1 + \frac{\ell - 1}{\ell}\eps'\right)x$ that is disjoint from $W_{k+1}$ and $W$.

Combining~\eqref{eq:good sets 1}, \eqref{eq:good sets 2}, and \eqref{eq:good sets 3}, we obtain that at least
\[
	\left(1- 2(\ell-1)\left(\frac{\beta}{2\ell^2}\right)^{(1+\eps')x}
		-\left(\frac{\beta}{2\ell}\right)^{x}\right) {n\choose (1+\eps')x}
	\geq \left(1- \beta^x / \ell\right){n\choose (1+\eps')x}
\]
sets $X_{k+1}\subseteq V_{k+1}$, $|X_{k+1}|=(1+\eps')x$, contain a family of subsets $(\tX_{k+1}^j)_{j \in [\ell] \setminus \{k+1\}}$, such that $\tX^j_{k+1}$ forms a regular pair with $X_j$ for $j \leq k$ and with $V_j$ for $j > k + 1$ respectively, as well as a subset $\hX_{k+1} \seq X_{k+1} \setminus (W_{k+1} \cup W)$. Since each of those subsets has size at least $\left(1 + \frac{\ell - 1}{\ell}\eps'\right)x$, we conclude that the set
\[
  \tX_{k+1} 
  := \hX_{k+1} \cap \bigcap_{j \in [\ell] \setminus \{k+1\}} \tX_{k+1}^j
\]
has size at least $x$. Now consider any subset $X'_{k+1} \seq \tX_{k+1}$ of cardinality exactly $x$. Since we have $x \geq |\tX_{k+1}^j| / 2$ for all $j \in [\ell] \setminus \{k + 1\}$, according to \Cref{prop: reg_large_her} any such set forms an $(\eps_1)$-regular pair of density $d(X'_{k + 1}, V_j) \simeps{\eps_1} d$ with $V_j$ for all $k + 1 < j \leq \ell$. This is deduced as follows:
\begin{equation*}\label{eq: bound on d(X_k+1, V_j)}
\begin{split}
  (1 - \eps_1)d
  &\leq (1 - \eps_1 / 3)(1 - \eps)d \\
  &\leq (1 - \eps_1 / 3)d(\tX_{k + 1}^j, V_j) \\
  &\leq d(X'_{k + 1}, V_j) \\
  &\leq (1 + \eps_1 / 3)d(\tX_{k+1}^j, V_j) \\
  &\leq (1 + \eps_1 / 3)(1 + \eps)d
  \leq (1 + \eps_1)d
  .
\end{split}
\end{equation*}
Moreover, regularity is also inherited by $X_{k+1}'$ w.r.t. $X'_j$, $1 \leq j \leq k$, again by~\Cref{prop: reg_large_her}. That is, $X_{k+1}'$ forms an $(\eps''/2)$-regular pair of density $d(X'_{k+1}, X'_j) \simeps{\eps''/2} d $ with $X'_j$ for all $1\le j\le k$. This follows from
\begin{equation*}
\begin{split}
  (1 - \eps'' / 2)d
  &\leq (1 - \eps'' / 6)(1 - 3\eps_1)d \\
  &\leq (1 - \eps'' / 6)d(\tX^j_{k+1}, X'_j) \\
  &\leq d(X'_{k+1}, X'_j) \\
  &\leq (1 + \eps'' / 6)d(\tX_{k+1}, X'_j) \\
  &\leq (1 + \eps'' / 6)(1 + 3\eps_1)d
  \leq (1 + \eps'' / 2)d
  .
\end{split}
\end{equation*}

We claim that all those sets $X_{k+1}$ form a good $(k+1)$-tuple $(X_1,\dots,X_k,X_{k+1})$. This immediately follows from the construction for $k = 0$. Thus, suppose that $k \geq 1$. The induction hypothesis tells us that $G[X'_1,\ldots, X'_k]$ contains a member of the family $\GG(k, x_1,\dots,x_k,d, \eps'' / 2)$. As $d(X'_{k+1}, X'_j) \simeps{\eps''/2} d$, and by construction of $X'_{k+1}$, for all $1 \leq j \leq k$, we have
\[
  |E(X'_{k+1}, X'_j)| \simeps{\eps''/2} x_jx_{k+1} d.
\]
Hence, $G[X'_1,\ldots, X'_{k+1}]$ is a member of the family $\GG(k+1, x_1,\dots,x_{k+1},d, \eps'' / 2)$. And since $G[X'_1,\ldots, X'_{k+1}]$ contains no extension of $G_k$ that belongs to $\B(k+1, x_1,\dots,x_{k+1},d', \eps'')$ owing to \Cref{chl3:tool2} and the choice of $X'_{k+1} \seq V_{k+1} \setminus (W_{k + 1} \cup W)$, it remains to provide any subgraph of $G[X_1',\dots,X_{k+1}']$ that is a member of the family $\G(k+1,x_1,\dots,x_{k+1},d',\eps'')$. This follows by applying \Cref{lem:sub_of_eps_is_eps} to each pair $(X'_{k+1}, X'_j)$, $1 \leq j \leq k$.
\end{proof}

Now we will prove \Cref{lem:gen_small_bad} using \Cref{l:tool2}.

\begin{proof}[Proof of \Cref{lem:gen_small_bad}]
Let $\hat{\beta}$ satisfy
\[
  \hat{\beta}^{(1-\eps')\eps'/2} 4^{2\ell} \leq \frac{\beta}{8}
  .
\]
We apply \Cref{l:tool2} with $\ell \leftarrow \ell-1$, $\beta \leftarrow \hat{\beta}$ and $\eps'\leftarrow \eps'$ to obtain constants $\eps_{\ell - 1} \leftarrow \eps$ and $C \leftarrow C$. We prove the lemma for
\[
  \eps = \min\left\{\eps_{\ell - 1}, \frac{\eps'}{4\ell}, \eps'^2\right\}
  \text{ and }
  C=C.
\]
We verify the conditions in~(\ref{eq: preconditions key lemma}) of \Cref{l:tool2}. It follows from $m \geq 2 n^{3/2} \sqrt{\log n}$ that
\[
  x = (1 - \eps')\frac{m}{n} 
  = \frac{m^2}{n^3 \log n}(1 - \eps')\frac{n^2}{m}\log n 
  \geq \frac{n^2}{m}\log n=\frac{\log n}{d}
\]
for $n$ sufficiently large.
As $d = m/n^2$, we also have 
\[
d \ge Cd_0 \enspace, \quad \eps' d \le  d' \le (1-\eps') d.
\]
Let $\G'(K_\ell-e, n, m, \eps) \seq \G(K_\ell-e,n,m,\eps)$ denote the subfamily of graphs $G$ that satisfy the following property: the number of $(\ell-2)$-tuples $(X_3,\ldots,X_\ell)$ such that for each $3 \leq i \leq \ell$, $X_i \seq V_i$, $|X_i| = (1+\eps')x$, and there exists $X'_i \seq X_i$ of size $x$ such that the induced graph $G[V_2,X'_3,\ldots,X'_\ell]$ contains a member of the family
\[
  \G(\ell - 1, n,x,\dots,x,d', \eps') \setminus \B(\ell - 1, n,x,\dots,x,d',\eps')
\]
as a subgraph, is at least $(1 - \hat{\beta}^x){n \choose (1 + \eps')x}^{\ell - 2}$. \Cref{l:tool2} applied to $(V_2,\ldots,V_\ell)$ with $x_2=n, x_i=x$ for $3 \le i \le \ell$ yields
\[
  |\G'(\ell, n, m, \eps)| 
  \geq (1 - \hat{\beta}^m) {n^2\choose m}^{{\ell - 1 \choose 2}}
  	{n^2\choose m}^{\ell - 2}
  \geq \left[1 - \left(\frac{\beta}{2}\right)^m\right] 
  	{n^2\choose m}^{{\ell \choose 2}-1}
  .
\]
Observe that the factor ${n^2\choose m}^{\ell - 2}$ accounts for the number of ways to distribute the edges between $V_1$ and $\cup_{i=3}^\ell V_i$. We constructively count all graphs that belong to $\G'(K_\ell-e, n, m, \eps)$, but violate the conditions of \Cref{lem:gen_small_bad}, and show that there are only a few of them. Firstly, we select all the edges between $V_i$ and $V_j$ for all $2 \leq i < j\leq \ell$. There are at most
\[
  {n^2\choose m}^{\ell - 1 \choose 2}
\]
possibilities. Secondly, for all the vertices $v \in V_1$, we select the degrees $d_j(v)$ into $V_j$ for $j\geq 3$. There are at most $(n+1)^{\ell n}\leq 2^m$ possibilities for sufficiently large $n$.
By \Cref{prop:reg_degree} and since we are constructing an $(\eps)$-regular graph between $V_1$ and $V_j$, for at least $(1 - \eps\ell)n$ vertices $v \in V_1$, we have to choose degrees $d_j(v) \geq (1 - \eps) m / n$ into all the sets $V_j$ for $j \ge 3$. Now we choose a set of at least $\eps' n$ vertices whose neighbourhood (together with $V_2$) does not contain a graph in $\G(\ell - 1, x_2,\dots,x_\ell, \eps') \setminus \B(\ell - 1, x_2,\dots,x_\ell)$ as a subgraph. There are at most $2^n \leq 2^m$ possibilities to choose these vertices. We denote by $A$ the set of all such vertices that additionally have a degree $\simeps{\eps}m / n$ into each set $V_j$ for $3 \leq j \leq \ell$. Note that $|A| \geq (\eps' - 2\ell\eps)n \geq (\eps'/2)n$ and that each vertex in $A$ has degree greater than
\[
  (1 - \eps) \frac{m}{n} \geq (1 - \eps'^2)\frac{m}{n} 
  = (1 + \eps')(1 - \eps')\frac{m}{n} = (1 + \eps')x.
\]
Now we select the neighbourhoods for the vertices in $V_1 \setminus A$. There are at most ${n \choose d_j(v)}$ possibilities for each vertex $v$ to choose its neighbourhood in $V_j$, where $d_j(v)$ is the already fixed size of the neighbourhood of $v$ in $V_j$. For all vertices in $A$, we first choose a set of size $(1 + \eps')x$ in each partition class $V_3,\ldots, V_\ell$. We require that these sets do not contain subsets of size $x$ that induce a graph which contains a member of $\G(\ell - 1, x_2,\dots,x_\ell, \eps') \setminus \B(\ell - 1, x_2,\dots,x_\ell)$. Since we consider graphs in $\G'(\ell, n, m, \eps)$, there are at most $\hat{\beta}^{x}{n \choose (1 + \eps')x}^{l-2}$ ways to choose such sets $X_i$, $3 \leq i \leq \ell$, in the neighbourhoods of each vertex in $A$. Now we have to choose the remaining neighbours for every vertex $v\in A$. There are at most $\prod_{j=3}^\ell{n - (1 + \eps')x \choose {d_j(v)- (1 + \eps')x}}$ ways to do this. The number of ways to select the neighbourhoods of vertices in $A$ is thus at most
\begin{equation*}
\begin{split}
  &\prod_{v\in A} \left(\hat{\beta}^{x} {n\choose (1 + \eps')x}^{\ell-2}
  \prod_{j=3}^\ell {n-(1 + \eps')x\choose {d_j(v)-(1 + \eps')x}}\right) \\
  &\leBy{eq:binomial coefficient 2}\; \hat{\beta}^{x\eps'n / 2} 
    \left(\prod_{v \in A}\prod_{j=3}^{\ell}4^{d_j(v)}{n\choose d_j(v)}\right)
  \leByM{d_j(v)\leq 2m/n} \hat{\beta}^{x \eps' n / 2} 4^{2\ell m}
    \left(\prod_{v \in A}\prod_{j=3}^{\ell}{n\choose d_j(v)}\right)
  .
\end{split}
\end{equation*}
We conclude from $x=(1 - \eps')m/n$ that there are at most
\[
  \left(\hat{\beta}^{(1 - \eps')\eps'/2} 4^{2\ell}\right)^m\left(\prod_{v\in V_1}
  \prod_{j=3}^{\ell}{n\choose d_j(v)}\right)
  \leq \left(\frac{\beta}{8}\right)^m{n^2\choose m}^{\ell-2}
\]
ways to select the neighbourhoods of the vertices in $V_1$. Taking the graphs in $\G(\ell, n, m, \eps) \setminus \G'(\ell, n, m, \eps)$ into account, we proved that there are at most 
\[
  \left[\left(\frac{\beta}{2}\right)^m + (2^m)^2 \left(\frac{\beta}{8}\right)^m\right]{n^2\choose m}^{\ell-2}{n^2\choose m}^{\ell-1\choose 2} \leq \beta^m{n^2\choose m}^{{\ell \choose 2}-1}
\]
graphs that violate the conditions of the lemma. 
\end{proof}




\section*{Acknowledgements}

The author would like to thank Stefanie Gerke for valuable suggestions and careful reading of the manuscript. The author was supported by a Royal Thai Government Doctoral Studentship.

\bibliographystyle{siam}
\bibliography{ref}

\end{document}